\numberwithin{equation}{section}
\newtheorem{theorem}{Theorem}[section]
\newtheorem{lemma}[theorem]{Lemma}
\newtheorem{definition}[theorem]{Definition}
\newtheorem*{definition*}{Definition}
\newtheorem{cor}[theorem]{Corollary}
\newtheorem{prop}[theorem]{Proposition}
\newtheorem{question}[theorem]{Question}
\newtheorem{conjecture}[theorem]{Conjecture}
\theoremstyle{remark}  
\title{Structural Characterisations of $(n-1,n)$-Trees}
\author{
  Gaurav Kottari\thanks{Corresponding Email: gk917@snu.edu.in},
  Niteesh Sahni\thanks{Email: niteesh.sahni@snu.edu.in}\\
  Department of Mathematics, Shiv Nadar Institution of Eminence,\\ Tehsil Dadri,  Gautam Buddha Nagar, 201314,\\ Uttar Pradesh, India 
}
\date{}
\newcommand{\mscprimary}{05E45}
\newcommand{\mscsecondary}{05C05}
\begin{document}

\maketitle

\begin{abstract}
We study higher-dimensional analogues of graph-theoretic trees within the class of pure $n$-simplicial complexes. Focusing on the case $m=n-1$ in Dewdney’s $(m,n)$-tree framework, we introduce refined notions of path and circuit sequences that overcome the structural limitations of existing definitions. Using these refinements, we establish higher-dimensional analogues of the classical characterisations of trees in graphs, including equivalences based on connectivity, acyclicity, path uniqueness, and enumerative constraints.  We further disprove two conjectures posed by Dewdney by constructing explicit counterexamples, and we formulate corrected versions that hold under additional necessary conditions in the case $m=n-1$. These results provide a structural characterisation of $(n-1,n)$-trees parallel to the classical theory of graph-theoretic trees.
\end{abstract}

\noindent\textbf{Mathematics Subject Classification (2020):} \mscprimary; \mscsecondary

\section{Introduction}\label{Intro}



Several notions of higher-dimensional trees have been developed in the literature, arising from different perspectives. In particular, simplicial trees introduced by Faridi \cite{faridi2002facet, caboara2007simplicial} is rooted in combinatorial commutative algebra, while hypertrees introduced by  \cite{kalai1983enumeration} are defined through acyclic homology. In contrast, Dewdney’s $(m,n)$-trees \cite{dewdney1974higher} provide a purely combinatorial framework based on facet orderings in pure simplicial complexes.

These approaches capture different aspects of higher-dimensional tree-like structures and are not equivalent in general; for instance, there exist simplicial complexes that are $(m,n)$-trees but are not hypertrees in the sense of Kalai. Likewise, there exist simplicial complexes that are simplicial trees in the sense of Faridi but not $(m,n)$-trees in the sense of Dewdney. Concrete examples illustrating these distinctions are provided in the Appendix.

The present work focuses purely on the combinatorial setting and develops a structural theory for the case $(m,n) = (n-1,n)$ within Dewdney’s framework, aiming to recover higher-dimensional analogues of classical graph-theoretic characterisations of trees.

In graph theory, trees admit several equivalent characterisations. In particular, for a finite
graph $G$ with $p$ vertices, the following statements are equivalent \cite{west2001introduction}:
\begin{enumerate}[label=(\roman*)]
    \item \label{1GraphProperty1} $G$ is a tree;
    \item \label{1GraphProperty2} $G$ is connected and has $p-1$ edges;
    \item \label{1GraphProperty3} $G$ is acyclic and has $p-1$ edges;
    \item \label{1GraphProperty4} $G$ is connected and acyclic;
    \item \label{1GraphProperty5} there exists a unique path between any pair of vertices of $G$.
\end{enumerate}

Beineke and Pippert \cite{beineke1969number} introduced the notion of $n$-trees as the first
higher-dimensional generalisation of graph-theoretic trees. In the special case of dimension
two, Beineke and Pippert~\cite{beineke1969characterizations} showed that several of the
characterisations above admit natural analogues.

Subsequently, Dewdney~\cite{dewdney1974higher} introduced the notion of an $(m,n)$-tree, where $n \ge 1$ and $0 \le m \le n-1$, as a generalisation of trees to pure $n$-simplicial complexes. While this framework successfully extends the enumerative characterisations given in \ref{1GraphProperty2}, it does not recover the structural characterisations given in \ref{1GraphProperty3}, \ref{1GraphProperty4} and \ref{1GraphProperty5}.

In this work, we address this structural gap for the case of $(n-1,n)$-trees. We first examine Dewdney’s notions of $(m,n)$-paths, and show that direct analogues of \ref{1GraphProperty5} fail under these definitions by constructing explicit counterexamples. Motivated by this, we introduce restricted $(m,n)$-path sequences, obtained by imposing additional conditions on Dewdney’s paths to eliminate redundancies. Using these restricted paths, we recover an analogue of \ref{1GraphProperty5}.

We then consider the analogue of \ref{1GraphProperty4}. We show that defining acyclicity via the absence of $(m,n)$-circuits is insufficient, and we introduce a refined notion of cyclicity based on a subcomplex of circuits, called Dewdney cycles, which captures the correct higher-dimensional obstruction to $(n-1,n)$-trees.

Although Dewdney~\cite{dewdney1974higher} generalised the classical structural property \ref{1GraphProperty2} to higher dimensions, this higher-dimensional
generalisation leaves certain structural aspects unresolved. To make this precise, we recall his
theorem below.

\begin{theorem}\label{1.1}
A pure $n$-simplicial complex $K$ is an $(m,n)$-tree if and only if the following properties hold for $K$:
\begin{enumerate}
\item $K$ is $(m,n)$-connected.
\item $\alpha_k(K) = \frac{\alpha_0(K)-m-1}{n-m}\binom{n+1}{k+1}-\frac{\alpha_0(K)-n-1}{n-m}\binom{m+1}{k+1}$, for at least one $k$ such that $1 \leq k \leq m$, where $\alpha_k(K)$ is the number of $k$-simplices of $K$.
\end{enumerate}
\end{theorem}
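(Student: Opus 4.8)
The plan is to prove the two implications separately: the forward direction by induction on the recursive construction of $(m,n)$-trees, and the reverse direction by extracting a spanning tree subcomplex and forcing equality in a face-counting inequality whose sharpness is governed by the convexity of binomial coefficients.

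For the forward direction I would induct on the number $N=\alpha_n(K)$ of $n$-simplices. An $(m,n)$-tree is built from a single $n$-simplex by repeatedly attaching a new $n$-simplex along exactly one of its $m$-faces. A single $n$-simplex has $n+1$ vertices and $\binom{n+1}{k+1}$ $k$-simplices, giving the base case. Each attachment identifies an $m$-face, hence introduces $(n+1)-(m+1)=n-m$ new vertices, and since all $\binom{m+1}{k+1}$ of the $k$-subfaces of the shared $m$-face are already present it introduces exactly $\binom{n+1}{k+1}-\binom{m+1}{k+1}$ new $k$-simplices. Thus after $N-1$ attachments $\alpha_0=(n+1)+(N-1)(n-m)$ and $\alpha_k=\binom{n+1}{k+1}+(N-1)\left(\binom{n+1}{k+1}-\binom{m+1}{k+1}\right)$ for every $0\le k\le m$; eliminating $N-1=\frac{\alpha_0-n-1}{n-m}$ yields the stated closed form for all such $k$, in particular for at least one $k$ with $1\le k\le m$. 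Connectivity is immediate, since attaching along a shared $m$-face keeps the facet-adjacency structure connected.

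For the reverse direction, fix a value of $k$ with $1\le k\le m$ for which the identity holds. Using $(m,n)$-connectivity I would order the $n$-simplices as $\sigma_1,\dots,\sigma_N$ so that each $\sigma_i$ with $i\ge 2$ shares at least one $m$-face with $K_{i-1}=\sigma_1\cup\cdots\cup\sigma_{i-1}$. If $\sigma_i$ contributes $v_i$ new vertices, it has $w_i=n+1-v_i$ old vertices, and since every $k$-simplex of $K_{i-1}$ contained in $\sigma_i$ is spanned by old vertices, the number of new $k$-simplices is at least $\binom{n+1}{k+1}-\binom{w_i}{k+1}=g(v_i)$, where $g(v)=\binom{n+1}{k+1}-\binom{n+1-v}{k+1}$. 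The analytic heart is that $g$ is strictly concave on $\{0,1,\dots,n-m\}$: its successive differences equal $\binom{n-v}{k}$, which strictly decrease precisely because $k\ge 1$. Hence $g(v_i)\ge\frac{v_i}{n-m}\left(\binom{n+1}{k+1}-\binom{m+1}{k+1}\right)$, with equality only at the endpoints $v_i\in\{0,n-m\}$. Summing over $i$ and using $\sum_i v_i=\alpha_0-(n+1)$ gives $\alpha_k\ge\frac{\alpha_0-m-1}{n-m}\binom{n+1}{k+1}-\frac{\alpha_0-n-1}{n-m}\binom{m+1}{k+1}$, so the hypothesised identity is exactly the equality case.

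Equality then forces, for each $i$, both that $\sigma_i$ meets $K_{i-1}$ in a single $m$-face (so $v_i=n-m$, a genuine tree-gluing) and that no redundant facet is present. Making this last point rigorous is the main obstacle: I must rule out the degenerate endpoint $v_i=0$, in which $\sigma_i$ adds no new vertex and no new $k$-simplex yet is still a new facet, since such a simplex preserves the counting identity while destroying the tree structure. The crux is to show that $(m,n)$-connectivity is incompatible with such a redundant facet --- equivalently, that the greedily extracted tree subcomplex $T$ with $\alpha_0(T)=\alpha_0(K)$ already realises every $k$-simplex of $K$, whence $T\subseteq K$ with $\alpha_k(T)=\alpha_k(K)$ forces $T=K$. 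This is the step where strict concavity (hence the hypothesis $k\ge 1$) and the precise definition of $(m,n)$-connectivity must be combined; once it is settled, the ordering exhibits $K$ as built entirely by single-$m$-face attachments, i.e.\ as an $(m,n)$-tree.
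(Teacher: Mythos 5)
Your forward direction is correct and is essentially the paper's own argument (the paper states Theorem~\ref{1.1} as a quoted result of Dewdney and proves the machinery only for $m=n-1$, in the Appendix proof of Theorem~\ref{1 tree simplices information}): induct on the number of facets, note that each $m$-complete attachment contributes $n-m$ new vertices and $\binom{n+1}{k+1}-\binom{m+1}{k+1}$ new $k$-simplices, and eliminate the number of attachment steps. Your concavity framing of the reverse inequality is also the same idea as the paper's Lemma~\ref{l 3.4}, just packaged analytically rather than term by term.

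The reverse direction, however, has a genuine gap, and it is exactly the one you flag yourself: ruling out a facet $\sigma_i$ with $v_i=0$. This is not a loose end that connectivity will absorb in some unspecified way. A facet all of whose vertices, and all of whose $k$-faces for the chosen $k\le m$, already lie in $K_{i-1}$ changes neither $\alpha_0$ nor $\alpha_k$, so every inequality in your chain remains an equality in its presence; and adding such a facet to a tree keeps the complex $(m,n)$-connected. So the argument as written cannot conclude. Your final claim that ``$T\subseteq K$ with $\alpha_k(T)=\alpha_k(K)$ forces $T=K$'' is precisely the unproved assertion: two pure $n$-complexes can a priori share all vertices and all $k$-simplices for $k\le m$ yet differ in their sets of $n$-simplices. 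What closes this in the paper is a separate structural lemma, Lemma~\ref{l 3.3}: in a simplicial tree, every $k$-complete subcomplex on at most $n+1$ vertices is contained in a single facet. The paper (in Lemma~\ref{l 3.4}) takes the ordering, lets $r$ be the longest prefix that is a complete ordering (so $K_r$ is already a tree), and argues that if the attachment of $\eta_{r+1}$ contained all $\binom{n+1}{k+1}$ of its $k$-faces, then by Lemma~\ref{l 3.3} the $k$-skeleton of $\eta_{r+1}$ would sit inside a single facet of $K_r$, forcing $\eta_{r+1}$ to equal that facet --- a contradiction; hence the inequality at step $r+1$ is strict and $\alpha_k(K)$ strictly exceeds the tree count. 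You need this lemma (proved by induction along the ordering) or an equivalent substitute. A secondary, smaller gap: for $m<n-1$ you assert that $(m,n)$-connectivity yields an ordering in which each facet meets the union of its predecessors in an $m$-face; this is the analogue of Theorem~\ref{3.2} and also requires proof.
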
 
The characterisation in Theorem~\ref{1.1} depends on the existence of at least one index
$k$ with $1 \le k \le m$ for which the stated relation holds. Consequently, the theorem does not apply when a pure
$n$-simplicial complex $K$ is $(m,n)$-connected and the same relation is satisfied for some indices $k$ with $m+1 \le k \le n$. In the extremal case $m = n-1$, this means that
Theorem~\ref{1.1} does not address the top-dimensional case $k = n$. In the present work, we
show that this limitation can be overcome for $(n-1,n)$-trees.

Additionally, Dewdney~\cite{dewdney1974higher} posed the following two conjectures for general
$(m,n)$-trees.

\begin{conjecture}\label{C1}
A pure $n$-simplicial complex is an $(m,n)$-tree if and only if it has the following two
properties:
\begin{enumerate}
    \item $K$ has no $(m,n)$-circuits.
    \item $\alpha_k(K) =
    \frac{\alpha_0(K)-m-1}{n-m}\binom{n+1}{k+1}
    -\frac{\alpha_0(K)-n-1}{n-m}\binom{m+1}{k+1}$,
    for some $1 \leq k \leq m$.
\end{enumerate}
\end{conjecture}

\begin{conjecture}\label{C2}
A pure $n$-simplicial complex is an $(m,n)$-tree if and only if it has the following two
properties:
\begin{enumerate}
    \item $K$ has no $(m,n)$-circuits.
    \item $\alpha_k(K) =
    \frac{\alpha_0(K)-m-1}{n-m}\binom{n+1}{k+1}
    -\frac{\alpha_0(K)-n-1}{n-m}\binom{m+1}{k+1}$,
    for all $1 \leq k \leq m$.
\end{enumerate}
\end{conjecture}

In this work, we disprove both conjectures by constructing explicit counterexamples. We further
identify additional conditions under which suitably refined versions of these conjectures do
hold in the case of $(n-1,n)$-trees. As a consequence, we obtain a higher-dimensional analogue
of the classical graph-theoretic result that an acyclic graph with $p$ vertices and $p-1$ edges
is a tree, thereby completing the collection of classical structural characterisations in the
setting of $(n-1,n)$-trees.

The structure of the paper is as follows. In Section~\ref{Preliminaries}, we present preliminary definitions and notations that will be used throughout the paper. Section~\ref{Connectivity} revisits Dewdney’s notions of paths, circuits, and connectivity in pure simplicial complexes, introduces reduced simplicial paths and Dewdney cycles, and develops connectivity and acyclicity notions that together recover path uniqueness and enable a tree characterisation analogous to the graph-theoretic setting. In Section~\ref{Simplicial trees}, we study the characterisations of $(n-1,n)$-trees.

\section{Preliminaries}\label{Preliminaries}
In this section, we review the basic notions related to combinatorial simplicial complexes and introduce the terminology required for the study of $(m,n)$-trees. These definitions form the foundation for the refinements and extensions developed in this work.

\subsection{Simplicial complexes}

A simplicial complex $K$ is a collection of non-empty subsets of a finite vertex set $V$, satisfying the condition that every singleton subset is included in $K$, and that every non-empty subset of an element of $K$ is also an element of $K$. The elements of a simplicial complex are referred to as simplices, and the dimension of a simplex is defined as one less than the number of its vertices; in particular, a $k$-simplex is a simplex of dimension $k$ and hence consists of exactly $k+1$ vertices. Accordingly, the dimension of a simplicial complex is the maximum dimension among its simplices. We denote by $ K^k $ the set of all $k$-dimensional simplices in a complex $K$, and by $\alpha_k(K)$ the number of such simplices. Given two simplices $\sigma$ and $\tau$ in a simplicial complex $K$, $\tau$ is a face of $\sigma$ if every vertex of $\tau$ is also a vertex of $\sigma$. A simplicial complex $K$ is called a pure $n$-simplicial complex if every simplex in $K$ is a face of at least one $n$-simplex of $K$. In particular, a pure $1$-simplicial complex $K$ with vertex set $V$ can be viewed as a simple, undirected graph $G(W,E)$ with no isolated vertices, where $W=V$ and $E=K^1$. A subset $L \subseteq K$ of simplices is termed a subcomplex of $K$ if it satisfies the following condition: for every simplex $\sigma$ in $L$, if $\tau$ is a face of $\sigma$ in $K$, then $\tau$ must also be an element of $L$. These definitions are standard and can be found in foundational texts on combinatorial algebraic topology~\cite{kozlov2007combinatorial,zhao2021simplicial}.

\subsection{Dewdney’s framework}\label{dewdneyframework}

Dewdney’s $(m,n)$-tree framework~\cite{dewdney1974higher} provides one of the earliest higher-dimensional analogues of graph-theoretic trees. 
The theory is based on an inductive construction of pure $n$-simplicial complexes via ordered addition of $n$-simplices, together with connectivity notions defined through alternating sequences of simplices. 
Throughout this subsection, we assume $0 \leq m \leq n-1$ unless otherwise stated.

\begin{definition}\label{1 d1}
    Let $S$ be a subset of a simplicial complex $K$. The closure of $S$ in $K$ is the smallest subcomplex $\overline{S}$ of $K$ such that $S \subseteq \overline{S}$, that is,
    $$
    \overline{S} = \{\tau \in K : \tau \text{ is a face of } \sigma \text{ for some } \sigma \in S\}.
    $$
\end{definition}

\begin{definition}\label{1 d2}
    The attachment of an $n$-simplex $\eta_i$ in a pure $n$-simplicial complex $K$ is the subcomplex denoted by $\mathcal{A}(\eta_i,K)$ and is defined as
    \[
\mathcal{A}(\eta_i, K)
=
\left\{
\sigma \subset \eta_i 
\;\middle|\;
\exists\, \eta_j \in K^n \setminus \{\eta_i\}
\text{ such that } \sigma \subset \eta_j
\right\}.
\]

\end{definition}

In other words, $\mathcal{A}(\eta_i,K)$ consists of all faces of $\eta_i$ that are shared with some other $n$-simplex of $K$.

\begin{definition}\label{1 d3}
   A simplicial complex $K$ with $p+1$ vertices is called $m$-complete if the dimension of $K$ is $m$, and every subset of the vertex set of $K$ with $m+1$ elements is a simplex in $K$. In particular, if $m = p$, then $K$ is called a complete simplicial complex.
\end{definition}

\noindent \textbf{Note:} In this work, we adopt a simplified version of the notation and terminology introduced by Dewdney~\cite{dewdney1974higher}. In particular, what we call an $m$-complete ordering corresponds to Dewdney’s $K_{m+1}^m$-ordering, and an $m$-ordering corresponds to his $S_{\geq 1}^m$-ordering. This reformulation reduces notational complexity while preserving the original combinatorial content. With this terminology in place, we now define these notions formally.

\begin{definition}\label{1 d4}
    A pure $n$-simplicial complex $K$ is said to have an $m$-ordering if there exists a sequence of distinct $n$-simplices $\eta_1, \dots, \eta_t$, where $t=\alpha_n(K)$, such that $\mathcal{A}(\eta_i, K_i)$ is an $m$-dimensional simplicial complex for all $2 \leq i \leq t$, where $K_i = \overline{\{\eta_1, \dots, \eta_i\}}$. Furthermore, $K$ is said to have an $m$-complete ordering if $\mathcal{A}(\eta_i, K_i)$ forms a complete simplicial complex with $m + 1$ vertices for all $2 \leq i \leq t$.
\end{definition}

Using the above notions, Dewdney defined $(m,n)$-trees as follows.

\begin{definition}\label{1 d mn tree}
    A pure $n$-simplicial complex $K$ is called an $(m,n)$-tree if $K$ has an $m$-complete ordering.
\end{definition}

To further characterise $(m,n)$-trees, Dewdney~\cite{dewdney1974higher} introduced the notions of $(m,n)$-walks, $(m,n)$-paths, $(m,n)$-connectedness, and $(m,n)$-circuits, which we now recall.

\begin{definition}
    In a pure $n$-dimensional simplicial complex $K$, an $(m,n)$-walk sequence between two $m$-simplices $\sigma_i$ and $\sigma_j$ is an alternating sequence of $m$-simplices and $n$-simplices, given by
    \[
    \sigma_i = \sigma_1, \eta_1, \sigma_2, \dots, \sigma_r, \eta_r, \sigma_{r+1} = \sigma_j,
    \]
    such that for all $1 \leq k \leq r$, the simplices $\sigma_k$ and $\sigma_{k+1}$ are distinct and both are faces of $\eta_k$.  
    The integer $r$ is called the length of the $(m,n)$-walk sequence.
\end{definition}

\begin{definition}\label{1 mnpath}
    An $(m,n)$-walk sequence 
    \[
    \sigma_i = \sigma_1, \eta_1, \sigma_2, \dots, \sigma_r, \eta_r, \sigma_{r+1} = \sigma_j
    \]
    is called an $(m,n)$-path sequence if all simplices in the sequence are distinct.
\end{definition}

\begin{definition}\label{1 d6}
    A pure $n$-simplicial complex $K$ is $(m,n)$-connected if there exists an $(m,n)$-path sequence between every pair of $m$-simplices in $K$.
\end{definition}

\begin{definition}\label{1 d7}
     An $(m,n)$-walk sequence 
     \[
     \sigma_i = \sigma_1, \eta_1, \sigma_2, \dots, \sigma_r, \eta_r, \sigma_{r+1} = \sigma_j
     \]
     is called an $(m,n)$-circuit sequence if it satisfies the following conditions:
\begin{enumerate}
    \item $\sigma_i = \sigma_j$.
    \item For all $p, q$ with $1 \le p, q \le r$ and $p \neq q$, we have $\sigma_p \neq \sigma_q$ and $\eta_p \neq \eta_q$.
\end{enumerate}
\end{definition}

\section{Paths, cycles, and connectivity in pure simplicial complexes}\label{Connectivity}

\subsection{Connectivity and components}
Having introduced $(m,n)$-connectivity in Section~\ref{dewdneyframework}, we now establish a basic result regarding $(n-1,n)$-connectivity.

\begin{prop}
\label{l2-1}
In an $(n-1,n)$-connected pure $n$-simplicial complex $K$, there exists an $(m,n)$-path sequence between any pair of $m$-simplices of $K$ for every $0 \le m \le n-1$.
\end{prop}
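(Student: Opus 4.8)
The plan is to reduce the general statement to the already-settled $(n-1)$-dimensional case and then descend to dimension $m$ while keeping track of the three defining conditions of a reduced path. When $m=n-1$ the claim is immediate: $(n-1,n)$-connectivity yields an $(n-1,n)$-path sequence between any two $(n-1)$-simplices, which is in particular an $(n-1,n)$-walk of finite length, so Lemma~\ref{l 3.1} upgrades it to a reduced $(n-1,n)$-path sequence. So fix $0 \le m \le n-2$ and two distinct $m$-simplices $\sigma_i,\sigma_j$. If both are faces of a common $n$-simplex $\eta$, then $\sigma_i,\eta,\sigma_j$ is already a reduced $(m,n)$-path sequence of length one, since conditions~(\ref{cd2}) and~(\ref{cd3}) of Definition~\ref{1 d} are vacuous for $r=1$; so assume no such $\eta$ exists. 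Using purity, I would choose $n$-simplices containing $\sigma_i$ and $\sigma_j$ and, inside them, $(n-1)$-simplices $\mu_i \supseteq \sigma_i$ and $\mu_j \supseteq \sigma_j$. These are distinct: if $\mu_i=\mu_j=\mu$, then $\sigma_i$ and $\sigma_j$ would both be faces of any $n$-simplex having $\mu$ as a face, contradicting the current assumption.

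Next I would invoke $(n-1,n)$-connectivity together with Lemma~\ref{l 3.1} to obtain a reduced $(n-1,n)$-path sequence
\[
\mu_i = \mu_1, \eta_1, \mu_2, \dots, \mu_r, \eta_r, \mu_{r+1} = \mu_j .
\]
In this sequence the $n$-simplices $\eta_1,\dots,\eta_r$ and the $(n-1)$-simplices $\mu_1,\dots,\mu_{r+1}$ are all distinct; for each $2 \le z \le r$ the simplex $\mu_z$ is a common face of $\eta_{z-1}$ and $\eta_z$; by condition~(\ref{cd2}), $\mu_1$ is a face only of $\eta_1$ and $\mu_{r+1}$ a face only of $\eta_r$ among $\eta_1,\dots,\eta_r$; and $\mu_z,\mu_{z+1}$ are both faces of $\eta_z$ for every $1 \le z \le r$.

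The descent to dimension $m$ proceeds as follows. Set $\tau_1=\sigma_i$ and $\tau_{r+1}=\sigma_j$, and for $2 \le z \le r$ pick an $m$-face $\tau_z \subseteq \mu_z$ (possible since $m+1 < n = |\mu_z|$). Because $\tau_z \subseteq \mu_z \subseteq \eta_z$ and $\tau_{z+1} \subseteq \mu_{z+1} \subseteq \eta_z$, the sequence $\sigma_i=\tau_1,\eta_1,\dots,\eta_r,\tau_{r+1}=\sigma_j$ is automatically an $(m,n)$-walk sequence, and taking the witnessing $(n-1)$-simplex $\tau'_z := \mu_z$ makes condition~(\ref{cd3}) hold verbatim, since the $\mu_z$ are distinct common $(n-1)$-faces of consecutive $\eta$'s. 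To force the $\tau_z$ to be pairwise distinct — thereby promoting the walk to an $(m,n)$-path sequence — I would apply the quoted set-theoretic result to the distinct sets $\mu_1,\dots,\mu_{r+1}$ of cardinality $n$ to extract distinct $m$-subsets; the prescribed endpoints $\tau_1,\tau_{r+1}$ are accommodated by choosing the interior faces to avoid them, and any residual coincidence merely produces a repeated simplex that is removed exactly as in the proof of Lemma~\ref{l 3.1}.

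Finally, condition~(\ref{cd2}) must be restored, since a low-dimensional face such as $\sigma_i$ may lie in several of the $\eta_z$ even though $\mu_1$ does not. I would take the extreme occurrences — the largest index $s$ with $\sigma_i \subseteq \eta_s$ and the smallest index $t$ with $\sigma_j \subseteq \eta_t$ (these index sets are disjoint by the no-common-simplex assumption) — and truncate to the sub-walk between $\eta_s$ and $\eta_t$, oriented from $\sigma_i$ to $\sigma_j$. The interior witnesses $\mu_z$ and their distinctness survive truncation, so condition~(\ref{cd3}) is preserved, while the extremal choice of $s$ and $t$ forces condition~(\ref{cd2}). I expect this closing bookkeeping — simultaneously securing distinctness of the descended interior simplices via the set-theoretic lemma and re-establishing condition~(\ref{cd2}) by truncation without disturbing condition~(\ref{cd3}) — to be the main obstacle, whereas the existence of the underlying reduced $(n-1,n)$-path and the inheritance of condition~(\ref{cd3}) via $\tau'_z=\mu_z$ are comparatively routine.
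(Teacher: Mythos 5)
Your proposal follows essentially the same route as the paper: pass to $(n-1)$-simplices containing the two $m$-simplices, invoke Lemma~\ref{l 3.1} to get a reduced $(n-1,n)$-path sequence, descend to $m$-faces of the interior witnesses using the quoted set-theoretic result to keep them distinct, and truncate at the extremal occurrences of $\sigma_i$ and $\sigma_j$ to restore condition~(\ref{cd2}). The one place you deviate is the truncation bookkeeping: the paper takes $p$ to be the largest index with $\sigma_i$ a face of $\eta_p$ and then $x$ the smallest index with $p \le x$ and $\sigma_j$ a face of $\eta_x$, which guarantees $p \le x$ and both halves of condition~(\ref{cd2}); your independent choice of the global largest $s$ and global smallest $t$ can yield $s > t$, and in the resulting reversed sub-walk $\sigma_i$ may still be a face of an interior $n$-simplex, so you should adopt the nested choice of indices.
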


\begin{proof}
Let $\tau_i$ and $\tau_j$ be $m$-simplices of a pure $n$-simplicial complex $K$, where $0 \le m \le n-1$. Let $\sigma_{i'}$ and $\sigma_{j'}$ be $(n-1)$-simplices of $K$ such that $\tau_i$ and $\tau_j$ are faces of $\sigma_{i'}$ and $\sigma_{j'}$, respectively. Assume that $\sigma_{i'} \neq \sigma_{j'}$. Since $K$ is $(n-1,n)$-connected, there exists an $(n-1,n)$-path sequence
$$
\sigma_{i'}=\sigma_1,\eta_1,\sigma_2,\ldots,\sigma_r,\eta_r,\sigma_{r+1}=\sigma_{j'}.
$$

Let $p$ be the largest integer $q$, with $1 \le q \le r$, such that $\tau_i$ is a face of $\eta_q$. If $\tau_j$ is a face of $\eta_p$, then
$$
\tau_i,\eta_p,\tau_j
$$
is an $(m,n)$-path sequence.

Suppose that $\tau_j$ is not a face of $\eta_p$. Let $\tau_{p+1}$ be an $m$-face of $\sigma_{p+1}$. If $\tau_j$ is a face of $\eta_{p+1}$, then
$$
\tau_i,\eta_p,\tau_{p+1},\eta_{p+1},\tau_j
$$
is an $(m,n)$-path sequence.

Now suppose that $\tau_j$ is not a face of $\eta_{p+1}$. Since $\sigma_{p+2}$ is distinct from $\sigma_{p+1}$, there exists an $m$-face $\tau_{p+2}$ of $\sigma_{p+2}$ distinct from $\tau_{p+1}$. If $\tau_j$ is a face of $\eta_{p+2}$, then
$$
\tau_i,\eta_p,\tau_{p+1},\eta_{p+1},
\tau_{p+2},\eta_{p+2},\tau_j
$$
is an $(m,n)$-path sequence.

Assume further that $\tau_j$ is not a face of $\eta_{p+2}$. Let $\tau_{p+3}$ be an $m$-face of $\sigma_{p+3}$ distinct from $\tau_{p+2}$.

If $\tau_{p+3}=\tau_{p+1}$ and $\tau_j$ is a face of $\eta_{p+3}$, then
$$
\tau_i,\eta_p,\tau_{p+1},\eta_{p+3},\tau_j
$$
is an $(m,n)$-path sequence.

On the other hand, if $\tau_{p+3}\neq\tau_{p+1}$ and $\tau_j$ is a face of $\eta_{p+3}$, then
$$
\tau_i,\eta_p,\tau_{p+1},\eta_{p+1},
\tau_{p+2},\eta_{p+2},
\tau_{p+3},\eta_{p+3},\tau_j
$$
is an $(m,n)$-path sequence.

If, in both cases, $\tau_j$ is not a face of $\eta_{p+3}$, then we continue this process. At the $k$th step, we choose an $m$-face $\tau_{p+k}$ of $\sigma_{p+k}$ distinct from $\tau_{p+k-1}$ and check whether $\tau_j$ is a face of $\eta_{p+k}$. Whenever a previously chosen $m$-simplex reappears, the resulting sequence may be shortened by omitting the intermediate portion, as in the case $\tau_{p+3}=\tau_{p+1}$ above.

Proceeding in this manner along the finite $(n-1,n)$-path sequence from $\sigma_{i'}$ to $\sigma_{j'}$, we eventually obtain an $(m,n)$-path sequence between $\tau_i$ and $\tau_j$.
\end{proof}

From Proposition~\ref{l2-1}, it follows that if a pure $n$-simplicial complex $K$ is $(n-1,n)$-connected, then for all $0 \leq m \leq n-1$ there exists an $(m,n)$-path sequence between any pair of $m$-simplices of $K$. Consequently, $K$ is $(m,n)$-connected for all $0 \leq m \leq n-1$. We therefore refer to such a complex as \emph{connected}. If \( K \) is not $(n-1,n)$-connected, we refer to it as \emph{disconnected}.

We now examine the basic structural consequences for connected pure $n$-simplicial complexes. In particular, this notion of connectivity induces a decomposition of a pure $n$-simplicial complex into components. This provides a foundation for the structural developments that follow.

We now formalise this by defining components associated with connected pure $n$-simplicial complexes.

\begin{definition}
    A pure $n$-subcomplex $L$ of a pure $n$-simplicial complex $K$ is called a component of $K$ if $L$ is connected and for every $\eta_i \in K^{n}\setminus L^{n}$, the subcomplex $L \cup \overline{\{\eta_i\}}$ is disconnected.
\end{definition}

\begin{figure}[h] 
    \centering
    \includegraphics[width=0.8\textwidth]{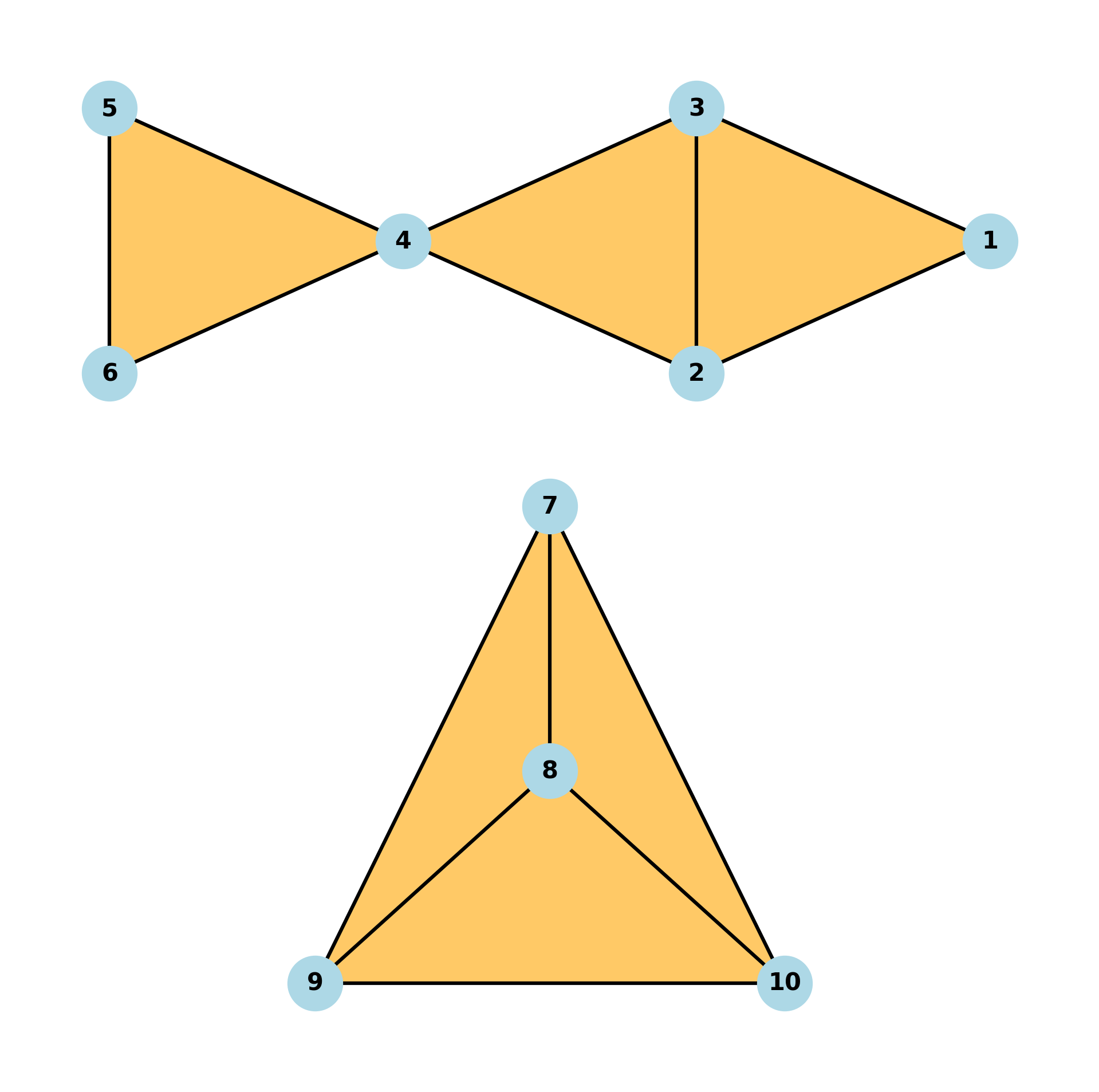} 
    \caption{Example of a pure 2-simplicial complex $K$ and its components}
    \label{fig1} 
\end{figure}
In Figure~\ref{fig1}, the pure 2-simplicial complex 
\[
K = \overline{\{\{1,2,3\}, \{2,3,4\}, \{4,5,6\}, \{7,8,9\}, \{8,9,10\}, \{7,8,10\}\}}
\]
consists of six $2$-simplices, fourteen $1$-simplices, and ten $0$-simplices. Here, the components of $K$ are:
\begin{itemize}
    \item $K_1 = \overline{\{\{4, 5, 6\}\}},$
    \item $K_2 = \overline{\{\{1, 2, 3\}, \{2, 3, 4\}\}},$
    \item $K_3 = \overline{\{\{7, 8, 9\}, \{8, 9, 10\}, \{7, 8, 10\}\}}.$
\end{itemize}

To develop a notion of connectivity analogous to graph components, we now study how $(n-1,n)$-path sequences partition a simplicial complex. 
\begin{prop}\label{3.1}
Let $K$ be a pure $n$-simplicial complex. Then there exists a partition $\mathcal{L}$ of $K^{n}$ such that, for every $L_i \in \mathcal{L}$, the subcomplex $\overline{L_i}$ is a component of $K$.
\end{prop}

\begin{proof}
We show that $(n-1,n)$-path connectivity induces an equivalence relation on the set of $n$-simplices, whose equivalence classes correspond precisely to the components of $K$.

Define a relation $\mathcal{R}$ on $K^n$ by declaring $\eta_i$ and $\eta_j$ to be related if there exist $(n-1)$-faces $\sigma_i$ and $\sigma_j$ of $\eta_i$ and $\eta_j$, respectively, such that an $(n-1,n)$-path sequence exists between them. Since every $n$-simplex has at least two $(n-1)$-faces, $\mathcal{R}$ is reflexive. By definition of an $(n-1,n)$-path sequence, $\mathcal{R}$ is symmetric.

Let $\eta_i, \eta_j, \eta_k \in K^n$ such that $\eta_i \mathcal{R} \eta_j$ and $\eta_j \mathcal{R} \eta_k$. Since $\eta_i \mathcal{R} \eta_j$, there exists an $(n-1,n)$-path sequence
\[
\sigma_i = \sigma_1, \eta_1, \sigma_2, \dots, \sigma_r, \eta_r, \sigma_{r+1} = \sigma_j
\]
between the $(n-1)$-faces $\sigma_i$ and $\sigma_j$. Similarly, since $\eta_j \mathcal{R} \eta_k$, there exists an $(n-1,n)$-path sequence
\[
\tau_j = \tau_1, \eta'_1, \tau_2, \dots, \tau_s, \eta'_s, \tau_{s+1} = \tau_k
\]
between $\tau_j$ and $\tau_k$.

Suppose $\sigma_i = \tau_k$. Since $\eta_i$ and $\eta_k$ are distinct and share a common $(n-1)$-face $\sigma_i$, there exists an $(n-1,n)$-path sequence
\[
\delta_p, \eta_i, \sigma_i, \eta_k, \delta_q,
\]
where $\delta_p$ and $\delta_q$ are $(n-1)$-faces of $\eta_i$ and $\eta_k$, respectively, and both are distinct from $\sigma_i$. Therefore, $\eta_i \mathcal{R} \eta_k$.

Now assume that $\sigma_i$ and $\tau_k$ are distinct.

If $\sigma_j = \tau_j$, then there exists an $(n-1,n)$-walk sequence between $\sigma_i$ and $\tau_k$:
\[
\sigma_i = \sigma_1, \eta_1, \sigma_2, \dots, \sigma_r, \eta_r, \sigma_{r+1}, \eta'_1, \tau_2, \dots, \tau_s, \eta'_s, \tau_{s+1} = \tau_k.
\]

If $\sigma_j \neq \tau_j$, then since both $\sigma_j$ and $\tau_j$ are $(n-1)$-faces of $\eta_j$, there exists an $(n-1,n)$-walk sequence between $\sigma_i$ and $\tau_k$:
\[
\sigma_i = \sigma_1, \eta_1, \sigma_2, \dots, \sigma_r, \eta_r, \sigma_{r+1}, \eta_j, \tau_1, \eta'_1, \tau_2, \dots, \tau_s, \eta'_s, \tau_{s+1} = \tau_k.
\]

In both cases, there exists an $(n-1,n)$-path sequence between $\sigma_i$ and $\tau_k$, which implies that $\eta_i \mathcal{R} \eta_k$. Thus, $\mathcal{R}$ is transitive and hence an equivalence relation.

Consequently, $\mathcal{R}$ partitions $K^{n}$ into equivalence classes. That is,
\[
K^{n} = \bigcup_{l=1}^{t} Cl(\eta_l)
\]
for some $t \in \mathbb{N}$, where
\[
Cl(\eta_x) \cap Cl(\eta_y) = \emptyset
\quad \text{for all } x \neq y.
\]
Therefore,
\[
K = \bigcup_{l=1}^{t} \overline{Cl(\eta_l)}.
\]

We now show that $\overline{Cl(\eta_l)}$ is a component of $K$ for all $1 \le l \le t$.

Let $\tau_p$ and $\tau_q$ be two $(n-1)$-simplices in $\overline{Cl(\eta_l)}$ for some $1 \le l \le t$. Then there exist $n$-simplices $\eta_{p'}$ and $\eta_{q'}$ in $Cl(\eta_l)$ such that $\tau_p$ is a face of $\eta_{p'}$ and $\tau_q$ is a face of $\eta_{q'}$. Assume that $\eta_{p'}$ and $\eta_{q'}$ are distinct. Since $\eta_{p'} \mathcal{R} \eta_{q'}$, there exist $(n-1)$-faces $\sigma_{p'}$ and $\sigma_{q'}$ of $\eta_{p'}$ and $\eta_{q'}$, respectively, such that there exists an $(n-1,n)$-path sequence
\[
\sigma_{p'} = \sigma_1, \eta_1, \sigma_2, \dots, \sigma_r, \eta_r, \sigma_{r+1} = \sigma_{q'}.
\]
Hence one can construct an $(n-1,n)$-walk sequence between $\tau_p$ and $\tau_q$, and therefore an $(n-1,n)$-path sequence between them. Thus, $\overline{Cl(\eta_l)}$ is connected.

Let $\eta_u \in K^n \setminus Cl(\eta_l)$. Then $\eta_u \in Cl(\eta_v)$ for some $v \neq l$. Since $\eta_u$ is not related to any $n$-simplices in $Cl(\eta_l)$, the pure $n$-simplicial complex $\overline{Cl(\eta_l)} \cup \overline{\{\eta_u\}}$ is disconnected. Therefore, $\overline{Cl(\eta_l)}$ is a component of $K$.
\end{proof}

The relationship between connectivity and orderings plays a central role in the theory of pure $n$-simplicial complexes. Dewdney~\cite{dewdney1974higher} stated, without proof, that a pure $n$-simplicial complex is $(m,n)$-connected if and only if it admits an $m$-ordering, and used this equivalence in his development of the theory. To the best of our knowledge, a complete proof of this equivalence in the general case does not appear in the literature. Since this result is used repeatedly in the case $m=n-1$ in the present work, we provide a proof in this special case for completeness.

\begin{theorem}
\label{3.2}
A pure $n$-simplicial complex $K$ is connected if and only if $K$ has an $(n-1)$-ordering.
\end{theorem}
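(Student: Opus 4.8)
The plan is to prove the two implications separately, after first reformulating the $(n-1)$-ordering condition as a statement about shared $(n-1)$-faces. The key observation is that two distinct $n$-simplices can have at most an $(n-1)$-dimensional common face, since their vertex sets differ and each has $n+1$ vertices. Hence for $i \geq 2$ the attachment $\mathcal{A}(\eta_i, K_i)$ is at most $(n-1)$-dimensional, and the requirement in Definition~\ref{1 d4} that $\mathcal{A}(\eta_i, K_i)$ be $(n-1)$-dimensional is equivalent to saying that $\eta_i$ has an $(n-1)$-face which is also a face of some earlier $\eta_j$ with $j < i$. Thus an $(n-1)$-ordering is precisely an enumeration $\eta_1, \dots, \eta_t$ of \emph{all} $n$-simplices of $K$ in which every simplex after the first shares an $(n-1)$-face with a previously listed one.

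For the direction ``$(n-1)$-ordering $\Rightarrow$ connected'', I would argue by induction on $i$ that each $K_i = \overline{\{\eta_1, \dots, \eta_i\}}$ is $(n-1,n)$-connected. The base case $K_1 = \overline{\{\eta_1\}}$ is immediate, since any two $(n-1)$-faces of $\eta_1$ are joined by the length-one walk through $\eta_1$. For the inductive step, let $\sigma$ be an $(n-1)$-face of $\eta_i$ shared with some earlier $\eta_j$, so that $\sigma$ already lies in $K_{i-1}$. Every $(n-1)$-face $\tau$ of $\eta_i$ is joined to $\sigma$ by the walk $\tau, \eta_i, \sigma$, and $\sigma$ is joined to every $(n-1)$-simplex of $K_{i-1}$ by the induction hypothesis; concatenating these walks and invoking Lemma~\ref{l 3.1} to reduce them to paths shows $K_i$ is connected. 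Taking $i = t$ yields connectivity of $K$.

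For the converse ``connected $\Rightarrow$ $(n-1)$-ordering exists'', I would build the ordering greedily: choose $\eta_1$ arbitrarily, and having chosen $\eta_1, \dots, \eta_{i-1}$, if these do not exhaust $K^n$, select an unused $n$-simplex sharing an $(n-1)$-face with one of them. The substance of the argument is the claim that such a simplex always exists, which I would prove by contradiction. Let $A = \{\eta_1, \dots, \eta_{i-1}\}$ and $B = K^n \setminus A \neq \emptyset$, and suppose no simplex of $B$ shares an $(n-1)$-face with any simplex of $A$. Choosing $(n-1)$-faces $\sigma_a \subset \eta_a \in A$ and $\sigma_b \subset \eta_b \in B$ and an $(n-1,n)$-path sequence $\sigma_a = \sigma_1, \eta^{(1)}, \dots, \eta^{(r)}, \sigma_{r+1} = \sigma_b$ between them (which exists by connectivity), note that consecutive $n$-simplices $\eta^{(k)}, \eta^{(k+1)}$ share the $(n-1)$-face $\sigma_{k+1}$, while $\eta^{(1)} \in A$ and $\eta^{(r)} \in B$; some consecutive pair must then straddle the partition and share an $(n-1)$-face, contradicting the assumption. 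Hence the greedy procedure never stalls and terminates with an enumeration of all of $K^n$, i.e.\ an $(n-1)$-ordering.

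The main obstacle is this last separation argument, and the points to handle carefully are: verifying the endpoint memberships $\eta^{(1)} \in A$ and $\eta^{(r)} \in B$ (using that $\sigma_a$ is a common $(n-1)$-face of $\eta_a$ and $\eta^{(1)}$, and symmetrically for $\sigma_b$, so that the alternative would already violate the no-shared-face hypothesis), and ruling out the degenerate case $\sigma_a = \sigma_b$, which would itself exhibit a shared $(n-1)$-face across the partition. Everything else reduces to the reformulation of the first paragraph together with the walk-to-path reduction of Lemma~\ref{l 3.1}.
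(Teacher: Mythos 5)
Your proof is correct, and it takes a noticeably different route from the paper's in both directions. For ``ordering $\Rightarrow$ connected'' the paper argues by contrapositive, invoking the component decomposition of Proposition~\ref{3.1}: in a disconnected complex the first $n$-simplex listed from a second component has an attachment of dimension less than $n-1$. You instead give a direct induction showing each $K_i$ is $(n-1,n)$-connected, using only the trivial length-one walks through $\eta_i$ and the walk-to-path reduction of Lemma~\ref{l 3.1}; this avoids the component machinery entirely (you should note explicitly that pairs of \emph{new} faces of $\eta_i$ are joined by $\tau,\eta_i,\tau'$, but that is immediate). For ``connected $\Rightarrow$ ordering'' both arguments are greedy constructions, but the paper extends the ordering by concatenating whole $(n-1,n)$-path sequences and deduplicating, whereas you add one $n$-simplex at a time and justify that the procedure never stalls by a cut argument: if no simplex of $B$ shared an $(n-1)$-face with a simplex of $A$, a path sequence from a face of $A$ to a face of $B$ would have to cross the partition at some consecutive pair sharing an $(n-1)$-face. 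Your reformulation of the $(n-1)$-ordering condition (attachment is $(n-1)$-dimensional iff an $(n-1)$-face is shared with an earlier simplex) and the endpoint checks $\eta^{(1)}\in A$, $\eta^{(r)}\in B$ are exactly the points that need care, and you handle them. On balance your version is the more self-contained and rigorous of the two: the paper's ``removing duplicates while preserving first occurrence'' and ``continuing the process'' leave the verification that the merged sequence is still an $(n-1)$-ordering implicit, while your single-step extension makes it automatic.
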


\begin{proof}
Let $K$ be a connected pure $n$-simplicial complex. If $\sigma_i$ and $\sigma_j$ are two $(n-1)$-simplices of $K$, then there exists an $(n-1,n)$-path sequence
\[
\sigma_i = \sigma_1, \eta_1, \sigma_2, \dots, \sigma_r, \eta_r, \sigma_{r+1} = \sigma_j.
\]

If $K^{n} = \{\eta_1, \dots, \eta_r\}$, then by taking $K_i = \overline{\{\eta_1, \dots, \eta_i\}}$ for all $2 \le i \le r$, we conclude that $K$ has an $(n-1)$-ordering.

Let $\eta_k \in K^n \setminus \{\eta_1, \dots, \eta_r\}$, and let $\sigma_k$ be the $(n-1)$-face of $\eta_k$ distinct from $\sigma_j$. Then there exists an $(n-1,n)$-path sequence
\[
\sigma_j = \tau_1, \eta'_1, \tau_2, \dots, \tau_s, \eta'_s, \tau_{s+1} = \sigma_k.
\]
Suppose that
\[
K^n = \{\eta_1, \dots, \eta_r\} \cup \{\eta'_1, \dots, \eta'_s\} \cup \{\eta_k\}.
\]
After concatenating the elements
\[
\eta_1, \dots, \eta_r, \eta'_1, \dots, \eta'_s, \eta_k
\]
in this order and removing duplicates while preserving their first occurrence, we obtain
\[
K^n = \{\eta_1, \dots, \eta_v\},
\]
where $r+1 \le v \le r+s+1$, and $\eta_1, \dots, \eta_v$ are the distinct elements in their order of first appearance. By taking $K_i = \overline{\{\eta_1, \dots, \eta_i\}}$ for all $2 \le i \le v$, we conclude that $K$ has an $(n-1)$-ordering.

If there exists an $n$-simplex of $K$ that is not an element of $\{\eta_1, \dots, \eta_v\}$, then by continuing this process we obtain an $(n-1)$-ordering of $K$.

We prove the converse by contrapositive method. Let $K$ be a pure $n$-simplicial complex that is disconnected, and suppose that $\alpha_n(K) = t$.  Since $K$ is disconnected, it has at least two components.

Let $\eta_1, \dots, \eta_t$ be a sequence of distinct $n$-simplices of $K$. Let $k$ be the least integer $j$ such that $2 \le j \le t$ and $\eta_j$ lies in a component distinct from that of $\eta_1$. Then $\mathcal{A}(\eta_k, K_k)$ is not an $(n-1)$-dimensional simplicial complex, where $K_k = \overline{\{\eta_1, \dots, \eta_k\}}$. Therefore, $K$ has no $(n-1)$-ordering.
\end{proof}

\subsection{Reduced simplicial paths}

We now examine the limitations of Dewdney’s notion of an $(m,n)$-path sequence. 
Unlike the graph-theoretic setting, such path sequences need not be unique in $(n-1,n)$-trees. This failure of uniqueness indicates that Dewdney’s definition is 
too permissive to support higher-dimensional analogues of classical path-based 
characterisations of trees.

The source of this non-uniqueness arises from a local configuration of simplices. Let $K$ be an $(n-1,n)$-tree with at least two $n$-simplices and $n \ge 2$. 
Then there exists an $(n-1)$-simplex $\sigma_k$ that is a face of two distinct 
$n$-simplices $\eta_i$ and $\eta_j$. For any $0 \le m \le n-2$, any two distinct 
$m$-faces of $\sigma_k$ admit two distinct $(m,n)$-simplicial path sequences of length one between them, passing through $\eta_i$ and $\eta_j$, respectively. 
Consequently, path non-uniqueness arises whenever the endpoints lie within such a simplex. 
To isolate this obstruction, we introduce the following notion, which, to the best of our knowledge, is not standard in the literature.

\begin{definition}\label{1joint}
An $m$-simplex of a pure $n$-simplicial complex $K$ is called a 
joint $m$-simplex if it is a face of at least two distinct $n$-simplices.
\end{definition}

Joint simplices therefore account for one fundamental source of path non-uniqueness. Specifically, whenever two $m$-simplices lie in a common joint $(n-1)$-simplex, multiple $(m,n)$-path sequences necessarily exist between them. Excluding such pairs is thus a necessary step toward recovering path uniqueness. However, as we show below, this exclusion alone is not sufficient. Even when the endpoints are not faces of a common joint $(n-1)$-simplex, non-uniqueness may still arise due to ambiguity in how paths traverse successive $n$-simplices. 

This phenomenon is illustrated by a simple example. The simplicial complex
\[
S_1=\overline{\{\{1,2,3\},\{2,3,4\},\{3,4,5\}\}}
\]
is an $(1,2)$-tree, since the sequence of $2$-simplices $\{1,2,3\}, \{2,3,4\}, \{3,4,5\}$ form a $1$-complete ordering of $S_1$. Here, there exist two distinct $(0,2)$-path sequences between the $0$-simplices $\{1\}$ and $\{5\}$, namely,
\[
\{1\},\{1,2,3\},\{2\},\{2,3,4\},\{4\},\{3,4,5\},\{5\}
\quad\text{and}\quad
\{1\},\{1,2,3\},\{3\},\{3,4,5\},\{5\}.
\]

This shows that Dewdney’s definition is too permissive to ensure path uniqueness in $(n-1,n)$-trees. The ambiguity in this example arises from the freedom to pass between consecutive $n$-simplices that need not share an $(n-1)$-face. To restrict this behaviour, one could strengthen the definition of a path by demanding that each pair of consecutive $n$-simplices meet along a common $(n-1)$-face. However, this condition alone is not sufficient to guarantee uniqueness.

Indeed, consider the $(1,2)$-tree
\[
S_2=\overline{\{\{1,2,3\},\{2,3,4\},\{2,3,5\}\}}.
\]
 Here, there exist two distinct $(0,2)$-path sequences between the $0$-simplices $\{1\}$ and $\{4\}$, namely
\[
\{1\},\{1,2,3\},\{2\},\{2,3,5\},\{3\},\{2,3,4\},\{4\}
\quad\text{and}\quad
\{1\},\{1,2,3\},\{2\},\{2,3,4\},\{4\}.
\]
Although consecutive $2$-simplices in both sequences intersect along $1$-faces, the $1$-simplex $\{2,3\}$ appears more than once in the longer sequence. This shows that requiring adjacency via $(n-1)$-faces alone does not prevent non-uniqueness, and that it is necessary to additionally require that all such $(n-1)$-simplices occurring along the sequence be distinct.

However, even with this restriction, non-uniqueness may persist. 
Indeed, in the $(2,3)$-tree
\[
S_3=\overline{\{\{1,2,3,4\},\{2,3,4,5\},\{2,4,5,6\},\{4,5,6,7\}\}},
\]
there exist two distinct $(0,3)$-path sequences between the $0$-simplices $\{3\}$ and $\{6\}$, namely
\[
\begin{aligned}
\{3\},\{2,3,4,5\},\{4\},\{2,4,5,6\},\{6\}
\end{aligned}
\]
and
\[
\begin{aligned}
\{3\},\{1,2,3,4\},\{2\},\{2,3,4,5\},\{4\},\{2,4,5,6\},\{5\},\{4,5,6,7\},\{6\}.
\end{aligned}
\]
 This shows that controlling the transitions between consecutive $n$-simplices alone does not suffice to guarantee uniqueness. In particular, ambiguity may also arise at the endpoints of a path when the initial and terminal $m$-simplices are faces of more than one $n$-simplex along the sequence. To remove this endpoint ambiguity, we impose the additional restriction that both the initial and terminal $m$-simplices be faces of unique $n$-simplices within the sequence.

Motivated by the above examples, we seek a notion of path that eliminates the different sources of ambiguity that arise in $(m,n)$-path sequences. These include ambiguity in the choice of initial and terminal $n$-simplices, repetition of intermediate $(n-1)$-faces, and non-unique transitions between consecutive $n$-simplices. We now formalise this notion.



\begin{definition}\label{1 d}
    An $(m,n)$-path sequence $\sigma_i = \sigma_1,\eta_1,\sigma_2,\dots,\sigma_{r},\eta_{r},\sigma_{r+1}=\sigma_j$ in a pure $n$-simplicial complex $K$ is called a reduced $(m,n)$-path sequence if the following conditions hold:
    \begin{enumerate}
        
        \item \label{cd2} $\sigma_i$ is a face only of $\eta_1$ among the $n$-simplices in the sequence, and $\sigma_j$ is a face only of $\eta_r$.
 
        \item \label{cd3} For all $2 \le z \le r$, the simplex $\sigma_z$ is a face of an $(n-1)$-simplex $\sigma'_z$ of $K$ such that $\sigma'_z$ is a face of both $\eta_{z-1}$ and $\eta_z$. Moreover, for all $x, y$ with $2 \le x, y \le r$ and $x \neq y$, we have $\sigma'_x \neq \sigma'_y$.

    \end{enumerate}
    The subcomplex $P = \overline{\{\eta_1,...,\eta_r\}}$ of $K$ is called a reduced simplicial path.
\end{definition}

We now show that reduced $(n-1,n)$-simplicial paths always exist under $(n-1,n)$-connectivity.

\begin{prop}\label{1propn-1}
    In a connected pure $n$-simplicial complex $K$, there exists a reduced $(n-1,n)$-path sequence between any pair of $(n-1)$-simplices of $K$.
\end{prop}

\begin{proof}
Let $\sigma_i$ and $\sigma_j$ be $(n-1)$-simplices of a pure $n$-simplicial complex $K$. Since $K$ is connected, there exists an $(n-1,n)$-path sequence
\[
\sigma_{i} = \sigma_1, \eta_1, \sigma_2, \dots, \sigma_r, \eta_r, \sigma_{r+1} = \sigma_{j}.
\]

Let $p$ be the largest integer $q$ such that $1 \le q \le r$ and $\sigma_i$ is a face of $\eta_q$. Let $x$ be the smallest integer $y$ such that $p \le y \le r$ and $\sigma_j$ is a face of $\eta_y$. Now, we obtain a reduced $(n-1,n)$-path sequence
\[
\sigma_i, \eta_p, \sigma_{p+1}, \dots, \sigma_x, \eta_x, \sigma_j,
\]
between $\sigma_i$ and $\sigma_j$ in $K$.
\end{proof}

Proposition~\ref{1propn-1} shows that in a connected pure $n$-simplicial complex, a reduced $(n-1,n)$-path sequence exists between every pair of $(n-1)$-simplices. It is natural to ask whether a similar result holds for reduced $(m,n)$-path sequences when $0 \le m \le n-2$. The following example shows that this is not true in general. Even in a connected pure $n$-simplicial complex, there may exist pairs of $m$-simplices for which no reduced $(m,n)$-path sequence exists. Consider the pure $2$-simplicial complex $$K=\overline{\{\{1,2,3\},\{2,3,4\},\{3,4,5\},\{4,5,6\},\{5,6,7\},\{6,7,8\},\{3,7,8\},\{3,8,9\}\}}$$ shown in Figure~\ref{1fig:reducedcounterexample}. In this simplicial complex, no reduced $(0,2)$-path sequence exists between the $0$-simplices $\{1\}$ and $\{9\}$.


\begin{figure}[h]
    \centering
    \includegraphics[width=0.8\textwidth]{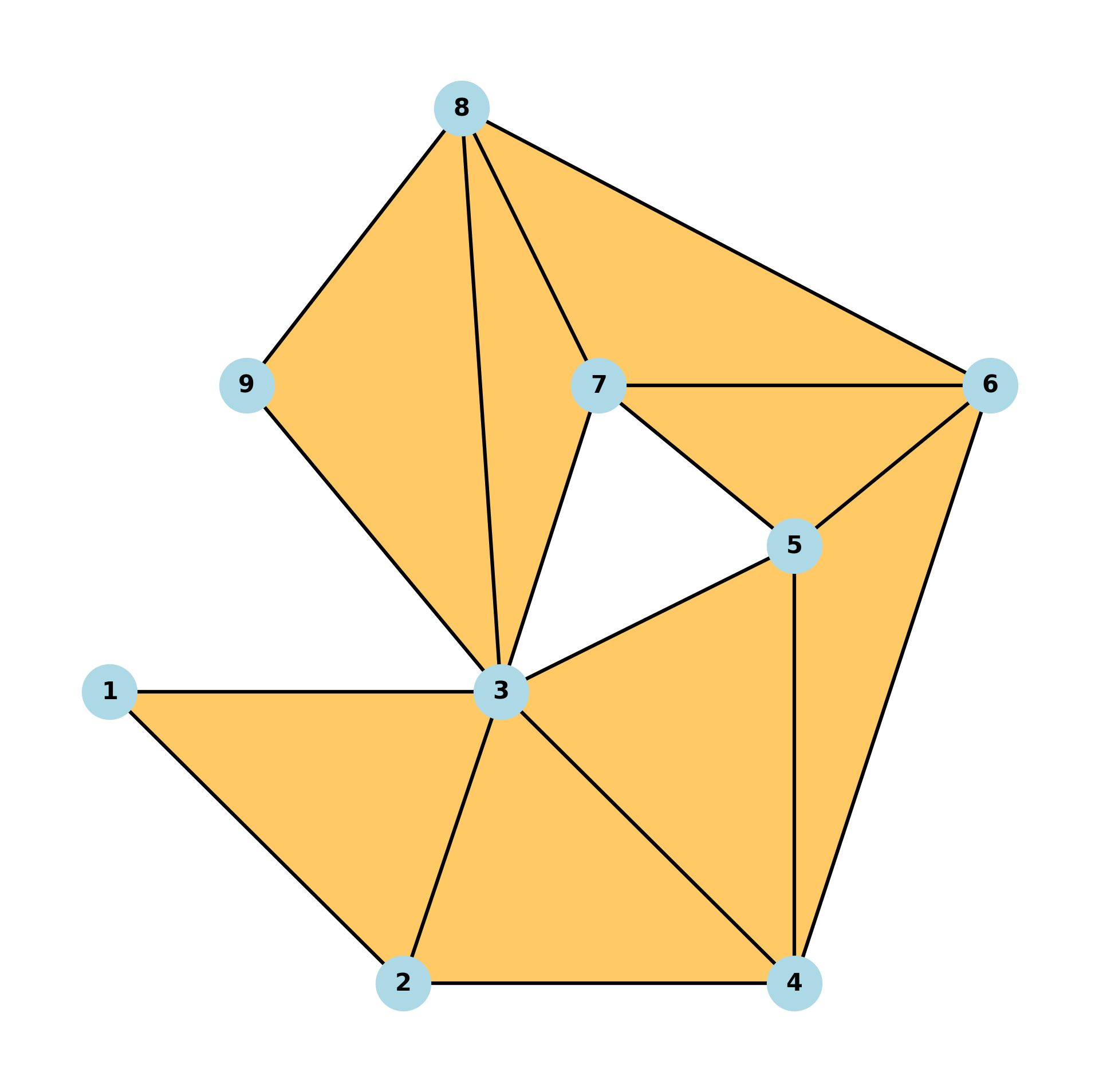} 
    \caption{A connected pure $2$-simplicial complex in which no reduced $(0,2)$-path sequence exists between the $0$-simplices $\{1\}$ and $\{9\}$.}
    \label{1fig:reducedcounterexample} 
\end{figure}

We now establish a uniqueness property of reduced simplicial paths in simplicial complexes without $(n-1,n)$-circuit sequences.


\begin{lemma}
\label{l 3.2}
Let $K$ be a pure $n$-simplicial complex with no $(n-1,n)$-circuit sequences. Suppose there exist two distinct reduced $(m,n)$-path sequences, for some $0 \le m \le n-2$, between $m$-simplices $\sigma_i$ and $\sigma_j$, with associated reduced simplicial paths $P_x$ and $P_y$. If $P_x$ is a subcomplex of $P_y$, then $P_x = P_y$.
\end{lemma}

\begin{proof}
Let $K$ be a pure $n$-simplicial complex and fix an integer $m$ with $0 \le m \le n-2$. Let
\[
\sigma_i = \sigma_1, \eta_1, \sigma_2, \dots, \sigma_r, \eta_r, \sigma_{r+1} = \sigma_j
\]
and
\[
\sigma_i = \tau_1, \eta'_1, \tau_2, \dots, \tau_s, \eta'_s, \tau_{s+1} = \sigma_j
\]
be two reduced $(m,n)$-path sequences between the $m$-simplices $\sigma_i$ and $\sigma_j$ in $K$, such that
\[
\{\eta_1, \dots, \eta_r\} \subseteq \{\eta'_1, \dots, \eta'_s\}.
\]

For all $2 \le z \le r$, there exists an $(n-1)$-simplex $\sigma'_z$ in $K$ satisfying condition~(\ref{cd3}) of the definition of a reduced $(m,n)$-path sequence. Similarly, for all $2 \le k \le s$, there exists an $(n-1)$-simplex $\tau'_k$ in $K$ satisfying condition~(\ref{cd3}). From condition~(\ref{cd2}) of the reduced path sequence definition, we conclude that $\eta_1 = \eta'_1$ and $\eta_r = \eta'_s$.

If $\eta_2 \neq \eta'_2$, then $\eta_2 = \eta'_t$ for some $3 \le t \le s-1$.

\textbf{Case 1:} If $\sigma'_2 \neq \tau'_p$ for all $2 \le p \le t$, then there exists an $(n-1,n)$-circuit sequence
\[
\sigma'_2, \eta'_1, \tau'_2, \eta'_2, \tau'_3 \dots,\tau'_t, \eta'_t,\sigma'_2.
\]

\textbf{Case 2:} If $\sigma'_2 = \tau'_2$, then there exists an $(n-1,n)$-circuit sequence
\[
\tau'_2, \eta'_2, \tau'_3, \dots, \tau'_t, \eta'_t, \tau'_2.
\]

\textbf{Case 3:} If $\sigma'_2 = \tau'_p$ for some $2 < p \le t$, then there exists an $(n-1,n)$-circuit sequence
\[
\tau'_p, \eta'_1, \tau'_2, \dots, \tau'_{p-1}, \eta'_{p-1}, \tau'_p.
\]

In each of the three cases, we obtain a contradiction to the fact that $K$ has no $(n-1,n)$-circuit sequences. Hence, the assumption that $\eta_2 \neq \eta'_2$ is false, and therefore $\eta_2 = \eta'_2$.

Repeating a similar argument, we conclude that $\eta_x = \eta'_x$ for all $3 \le x \le r$.
\end{proof}

\subsection{Dewdney cycles}\label{Simplicial cycles}

In graph theory, a classical result states that a graph is a tree if and only if it is connected and acyclic. In the setting of pure $n$-simplicial complexes, a notion of connectivity based on $(n-1,n)$-paths has already been established. To obtain a higher-dimensional analogue of the graph-theoretic characterisation of trees, it therefore remains to identify an appropriate notion of acyclicity.

A first natural attempt would be to define a pure $n$-simplicial complex to be acyclic if it admits no $(m,n)$-circuit sequences. However, this requirement is too restrictive and excludes some simplicial complexes that are $(n-1,n)$-trees. For example, in the $(1,2)$-tree
\[
T_1=\overline{\{\{1,2,3\},\{1,3,5\},\{1,5,6\},\{3,4,5\}\}},
\]
the sequence
\[
\{1\},\{1,2,3\},\{3\},\{3,4,5\},\{5\},\{1,5,6\},\{1\}
\]
forms a $(0,2)$-circuit sequence. Thus, forbidding all $(m,n)$-circuit sequences rules out complexes that are otherwise tree-like and cannot serve as a suitable notion of acyclicity.

The failure in the previous example arises because successive $n$-simplices in a circuit may be linked through lower-dimensional faces without sufficient restriction. One might therefore attempt to refine the notion of a circuit by requiring that each transition between consecutive $n$-simplices occur through a common $(n-1)$-face. However, this condition alone does not eliminate circuit sequences. Indeed, in the $(1,2)$-tree
\[
T_2=\overline{\{\{1,2,3\},\{2,3,4\},\{2,4,5\}\}},
\]
the sequence
\[
\{2\},\{1,2,3\},\{3\},\{2,3,4\},\{4\},\{2,4,5\},\{2\}
\]
is a $(0,2)$-circuit sequence even though each pair of consecutive $2$-simplices intersects along a common $1$-face. This shows that adjacency via $(n-1)$-faces alone does not prevent cyclic behaviour. In this case, the obstruction appears to arise from endpoint ambiguity, as the initial $0$-simplex reappears as a face of an intermediate $2$-simplex. Thus, additional restrictions are required to control the behaviour of the initial simplex in a circuit sequence.

Even after imposing the above restrictions, short circuit sequences may still occur in an $(n-1,n)$-tree. For example, in the $(1,2)$-tree
\[
T_3=\overline{\{\{1,2,3\},\{2,3,4\}\}},
\]
the sequence
\[
\{2\},\{1,2,3\},\{3\},\{2,3,4\},\{2\}
\]
forms a $(0,2)$-circuit sequence satisfying all previously imposed conditions. To rule out such trivial cases, it is therefore necessary to require that a circuit sequence involve at least three distinct $n$-simplices.

Taken together, the above examples show that a suitable notion of acyclicity must simultaneously control adjacency between $n$-simplices, restrict endpoint behaviour, and exclude short circuit sequences. Motivated by these observations, we now introduce a definition of Dewdney cycles that incorporates all of these requirements.

\begin{definition}\label{1 def c}
    An $(m,n)$-circuit sequence $\sigma_i = \sigma_1, \eta_1, \sigma_2, \dots, \sigma_r, \eta_r, \sigma_{r+1} = \sigma_i$ in a pure $n$-simplicial complex $K$ is called an $(m,n)$-Dewdney cycle sequence if the following conditions hold:
   \begin{enumerate}
       \item $r\geq3$.
       \item For some $2 \leq k \leq r-1$, $\sigma_i$ is not a face of $\eta_k$.
       \item For all $2 \le z \le r$, $\sigma_z$ is a face of an $(n-1)$-simplex 
$\sigma'_z$ of $K$ such that $\sigma'_z$ is a face of both $\eta_{z-1}$ 
and $\eta_z$.

   \end{enumerate}
The subcomplex $C = \overline{\{\eta_1,...,\eta_r\}}$ of $K$ is called Dewdney cycle.

The complex $K$ is called cyclic if it admits an $(m,n)$-Dewdney cycle sequence for some $0 \leq m \leq n-1$, and acyclic otherwise.

\end{definition}

We now give a characterisation of cyclic pure $n$-simplicial complexes in terms of the combinatorial structure of their $n$-simplices.

\begin{theorem}
\label{3.3}
  A pure $n$-simplicial complex $K$ is cyclic if and only if there exists a sequence of at least three distinct $n$-simplices $\eta_1, \dots, \eta_r$ satisfying the following conditions:
\begin{enumerate}
    \item \label{t3,3111} For all $2 \leq z \leq r$, $\eta_{z-1}$ and $\eta_z$ have a common $(n-1)$-face $\sigma_z$.
    \item \label{t3.3 2} For some $0 \leq m \leq n-1$, there exists an $m$-simplex $\tau_1$ that is a face of both $\eta_1$ and $\eta_r$ but not of $\eta_t$ for some $2 \leq t \leq r-1$.
\end{enumerate}

\end{theorem}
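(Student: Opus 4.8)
The forward direction is essentially a restatement of Definition~\ref{1 def c}. Given an $(m,n)$-simplicial cycle sequence $\sigma_1,\eta_1,\sigma_2,\dots,\sigma_r,\eta_r,\sigma_{r+1}=\sigma_1$, the underlying circuit condition forces $\eta_1,\dots,\eta_r$ to be distinct, and condition~1 gives $r\geq 3$, so we have at least three distinct $n$-simplices. Condition~3 supplies, for each $2\leq z\leq r$, a common $(n-1)$-face $\sigma'_z$ of $\eta_{z-1}$ and $\eta_z$, which is exactly condition~1 of the theorem. Taking $\sigma_1$ as the distinguished $m$-simplex, the walk structure shows $\sigma_1$ is a face of both $\eta_1$ and $\eta_r$, while condition~2 guarantees it is not a face of $\eta_2$ (legitimate since $r\geq 3$), giving condition~2 of the theorem.

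For the converse I would argue by minimality. Among all sequences of at least three distinct $n$-simplices satisfying conditions~1 and~2, fix one $\eta_1,\dots,\eta_r$ of minimal length $r$, with distinguished $m$-simplex $\sigma_1$. The first step is to localise $\sigma_1$: in a minimal sequence, $\sigma_1$ is a face of $\eta_1$ and of $\eta_r$ but of no intermediate $\eta_k$ with $2\leq k\leq r-1$. Indeed, if $\sigma_1$ were a face of some intermediate $\eta_k$, then splitting at $\eta_k$ and keeping whichever half still contains an intermediate $\eta_t$ not having $\sigma_1$ as a face yields a strictly shorter sequence satisfying conditions~1 and~2, contradicting minimality. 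This immediately establishes condition~2 of Definition~\ref{1 def c}.

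The crux of the argument is producing the distinct $(n-1)$-faces required by condition~3 of Definition~\ref{1 def c}. Let $\tau_z$ be the (necessarily unique) common $(n-1)$-face of $\eta_{z-1}$ and $\eta_z$ for $2\leq z\leq r$, and I would show $\tau_z\neq\tau_{z'}$ for $z\neq z'$. If $\tau_z=\tau_{z'}$ with $z<z'$, then $\eta_{z-1}$ and $\eta_{z'}$ share this face, so deleting $\eta_z,\dots,\eta_{z'-1}$ gives a shorter sequence; minimality excludes this in every case except the degenerate one $z=2,\ z'=r$. That case is the main obstacle, and it is handled by a direct intersection argument: writing $\eta_1=\tau\cup\{v_1\}$ and $\eta_r=\tau\cup\{v_r\}$ with $\tau=\tau_2=\tau_r$ and $v_1\neq v_r$, one obtains $\sigma_1\subseteq\eta_1\cap\eta_r=\tau\subseteq\eta_2$, contradicting the localisation step that $\sigma_1$ is not a face of $\eta_2$. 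Hence the $\tau_z$ are pairwise distinct.

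Finally, I would assemble the cycle sequence by choosing the interior $m$-simplices. Applying the quoted set-theoretic result to the distinct $(n-1)$-faces $\tau_2,\dots,\tau_r$ (sets of cardinality $n$) yields distinct $m$-faces $\rho_z\subseteq\tau_z$ of cardinality $m+1$; for $m=n-1$ one simply takes $\rho_z=\tau_z$. Setting $\rho_1=\rho_{r+1}=\sigma_1$, the sequence $\sigma_1,\eta_1,\rho_2,\eta_2,\dots,\rho_r,\eta_r,\sigma_1$ is the desired $(m,n)$-simplicial cycle sequence: consecutive $m$-simplices are distinct faces of the intervening $\eta_k$ because $\tau_z,\tau_{z+1}\subseteq\eta_z$, and each $\rho_z$ with $z\geq 2$ differs from $\sigma_1$ since it lies in an intermediate $\eta$ not containing $\sigma_1$. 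It then remains to check the circuit, cycle, and distinctness conditions, which follow directly. The two places demanding care are the degenerate $(n-1)$-face collision above and the bookkeeping needed to confirm that all $m$-simplices are distinct via the set-theoretic lemma.
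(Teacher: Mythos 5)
Your proposal is correct, and its converse runs on a different engine from the paper's. The paper works with the given sequence directly: it takes $p$ to be the first index at which $\sigma_1$ fails to be a face and $k$ the next index at which it reappears, extracts the sub-walk bracketed by $\eta_{p-1}$ and $\eta_k$, reduces it to an $(n-1,n)$-path sequence whose $n$-simplices all lie in $\{\eta_p,\dots,\eta_{k-1}\}$ (so that the distinct $(n-1)$-simplices required by condition~3 of Definition~\ref{1 def c} come for free from the path property, in the spirit of Lemma~\ref{l 3.1}), and only then invokes the set-theoretic result to choose distinct $m$-faces. You instead argue by an extremal principle: taking a sequence of minimal length satisfying the two hypotheses, you show minimality alone forces both the localisation of $\sigma_1$ (it is a face of no intermediate $\eta_k$, which the paper obtains instead by its choice of $p$ and $k$) and the pairwise distinctness of the connecting $(n-1)$-faces $\tau_z$, up to the single possible collision $\tau_2=\tau_r$, which you eliminate with the intersection computation $\sigma_1\subseteq\eta_1\cap\eta_r=\tau_2\subseteq\eta_2$. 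Both arguments finish identically with the quoted set-theoretic lemma. The paper's route is shorter because the walk-to-path reduction absorbs all the distinctness bookkeeping (though one must still observe that the reduction keeps the $n$-simplices inside $\{\eta_p,\dots,\eta_{k-1}\}$, hence away from $\sigma_1$); your route makes that bookkeeping explicit at the cost of one degenerate case, and as a by-product verifies the circuit and cycle conditions more transparently. Your forward direction matches the paper's, which dismisses it as immediate from Definition~\ref{1 def c}.
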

    
\begin{proof}
    The direct part is straightforward and follows directly from the definition of a Dewdney cycle sequence.

To prove the converse, consider an alternating sequence of $(n-1)$-simplices and $n$-simplices:
$\sigma_2, \eta_2, \sigma_3, \dots, \sigma_{r-1}, \eta_{r-1}, \sigma_r$. 
Let $p$ be the smallest integer $q$ such that $2 \leq q \leq r-1$ and $\tau_1$ is not a face of $\eta_q$. 
Similarly, let $k$ be the smallest integer $l$ such that $p+1 \leq l \leq r$ and $\tau_1$ is a face of $\eta_l$.

Now consider the alternating sequence of $(n-1)$-simplices and $n$-simplices
\[
\sigma_p, \eta_p, \sigma_{p+1}, \dots, \sigma_{k-1}, \eta_{k-1}, \sigma_k.
\]
Since $\eta_{p-1}$ and $\eta_k$ are distinct, and $\tau_1$ is a face of both $\eta_{p-1}$ and $\eta_k$, we conclude that $\sigma_p \neq \sigma_k$. 
Thus, there exists an $(n-1,n)$-path sequence
\[
\sigma_p = \sigma'_2, \eta'_2, \sigma'_3, \dots, \sigma'_s, \eta'_s, \sigma'_{s+1} = \sigma_k
\]
such that
\[
\{\eta'_2, \dots, \eta'_s\} \subseteq \{\eta_p, \eta_{p+1}, \dots, \eta_{k-1}\}.
\]
Now consider a sequence of distinct $m$-simplices $\tau_2, \dots, \tau_{s+1}$, where $\tau_x$ is a face of $\sigma'_x$ for all $2 \leq x \leq s+1$. Therefore,
\[
\tau_1, \eta_{p-1}, \tau_2, \eta'_2, \tau_3, \dots, \tau_s, \eta'_s, \tau_{s+1}, \eta_k, \tau_1
\]
forms an $(m,n)$-Dewdney cycle sequence in $K$.

\end{proof}

We now have the following corollaries to Theorem \ref{3.3}.

\begin{cor}
\label{c 3.1}
    If a connected pure $n$-simplicial complex $K$ contains a joint $m$-simplex for some $0 \leq m \leq n-2$ such that it is not a face of any joint $(n-1)$-simplex, then $K$ is cyclic.
\end{cor}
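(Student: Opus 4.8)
The plan is to build exactly the configuration demanded by Theorem~\ref{3.3} and thereby exhibit an $(m,n)$-simplicial cycle sequence. Write $\sigma$ for the given joint $m$-simplex, so $\sigma$ is a face of two distinct $n$-simplices $\eta$ and $\eta'$. The single observation driving the argument is the following claim: \emph{no two distinct $n$-simplices that share an $(n-1)$-face can both contain $\sigma$.} Indeed, if distinct $n$-simplices $\eta_a,\eta_b$ share an $(n-1)$-face $\tau$, then since each has $n+1$ vertices and they differ, $\tau=\eta_a\cap\eta_b$ as vertex sets; so both containing $\sigma$ would force $\sigma\subseteq\tau$, making $\tau$ a joint $(n-1)$-simplex having $\sigma$ as a face, contrary to hypothesis. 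I would record this as a one-line preliminary claim.

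Next I would link $\eta$ and $\eta'$ by a chain of $n$-simplices with prescribed endpoints. Because $m\leq n-2$, I can choose $(n-1)$-faces $\alpha\subset\eta$ and $\beta\subset\eta'$ with $\sigma\subseteq\alpha$ and $\sigma\subseteq\beta$. By hypothesis neither $\alpha$ nor $\beta$ can be a joint $(n-1)$-simplex (each has $\sigma$ as a face), so $\alpha$ is a face of $\eta$ alone and $\beta$ of $\eta'$ alone; in particular $\alpha\neq\beta$. Since $K$ is connected, i.e.\ $(n-1,n)$-connected, there is an $(n-1,n)$-path sequence $\alpha=\sigma'_1,\eta'_1,\sigma'_2,\dots,\eta'_s,\sigma'_{s+1}=\beta$, whose $n$-simplices $\eta'_1,\dots,\eta'_s$ are distinct and whose consecutive pairs $\eta'_{z-1},\eta'_z$ share the common $(n-1)$-face $\sigma'_z$. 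Because $\alpha$ lies in $\eta$ only and $\beta$ in $\eta'$ only, the endpoints are pinned down: $\eta'_1=\eta$ and $\eta'_s=\eta'$.

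It then remains to check the two hypotheses of Theorem~\ref{3.3} for the distinct simplices $\eta'_1,\dots,\eta'_s$. The first condition holds by construction. For the second, apply the preliminary claim to the adjacent pair $\eta'_1=\eta$ and $\eta'_2$: since $\eta$ contains $\sigma$, the simplex $\eta'_2$ cannot. As $\eta'_s=\eta'$ does contain $\sigma$, we get $\eta'_2\neq\eta'_s$, which excludes $s=2$ (and $s=1$ is already excluded by $\eta\neq\eta'$), forcing $s\geq 3$. Thus $\sigma$ is a face of $\eta'_1$ and $\eta'_s$ but not of the intermediate $\eta'_2$ with $2\leq 2\leq s-1$, so Theorem~\ref{3.3} yields an $(m,n)$-simplicial cycle sequence and $K$ is cyclic.

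I expect the only delicate point to be the bookkeeping that pins the chain's endpoints to $\eta$ and $\eta'$ and guarantees length at least three; everything hinges on the preliminary claim that adjacency across an $(n-1)$-face is incompatible with two $n$-simplices both containing $\sigma$. Once that claim is in hand, the non-joint property of $\alpha$ and $\beta$ does the rest, and no genuinely hard estimate is involved.
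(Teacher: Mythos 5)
Your proposal is correct and follows essentially the same route as the paper: both arguments extend the joint $m$-simplex to non-joint $(n-1)$-faces of the two containing $n$-simplices, use $(n-1,n)$-connectedness to join them by a path sequence, rule out lengths $1$ and $2$, observe that the second $n$-simplex of the path cannot contain the $m$-simplex (since two distinct $n$-simplices sharing an $(n-1)$-face meet exactly in that face), and invoke Theorem~\ref{3.3}. Your explicit statement of the preliminary claim merely makes transparent what the paper uses implicitly in its $r=2$ case and final step.
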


\begin{proof}
Let $\tau_k$ be an $m$-face of the $n$-simplices $\eta_i$ and $\eta_j$ of a pure $n$-simplicial complex $K$, for some $0 \leq m \leq n-2$, such that $\tau_k$ is not a face of any joint $(n-1)$-simplex. Let $\sigma_i$ and $\sigma_j$ be $(n-1)$-faces of $\eta_i$ and $\eta_j$, respectively, such that $\tau_k$ is a face of both $\sigma_i$ and $\sigma_j$. Since $K$ is connected, there exists an $(n-1,n)$-path sequence
\[
\sigma_i = \sigma_1, \eta_1, \sigma_2, \dots, \sigma_r, \eta_r, \sigma_{r+1} = \sigma_j.
\]

If $r=1$, then either $\sigma_j$ is an $(n-1)$-face of $\eta_1$ and $\eta_j$, or $\sigma_i$ is an $(n-1)$-face of $\eta_i$ and $\eta_1$. In both cases, this contradicts the choice of the simplex $\tau_k$. If $r=2$, then $\sigma_2$ is an $(n-1)$-face of $\eta_1$ and $\eta_2$ such that $\tau_k$ is a face of $\sigma_2$, again yielding a contradiction.

Therefore, $r \geq 3$. Since $\eta_1$ is distinct from $\eta_2$ and $\tau_k$ is not a face of $\sigma_2$, it follows that $\tau_k$ is not a face of $\eta_2$. Thus, by Theorem~\ref{3.3}, $K$ is cyclic.
\end{proof}

\begin{cor}
  \label{c 3.2}
   If a pure $n$-simplicial complex $K$ contains an $(n-1,n)$-circuit sequence, then $K$ contains an $(n-1,n)$-Dewdney cycle sequence.
  
\end{cor}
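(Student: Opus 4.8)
The plan is to avoid massaging the given circuit directly into the restrictive form demanded by the simplicial-cycle definition, and instead to feed its $n$-simplices into Theorem~\ref{3.3} with $m = n-1$. So I would start from an $(n-1,n)$-circuit sequence $\sigma_1, \eta_1, \sigma_2, \dots, \sigma_r, \eta_r, \sigma_{r+1} = \sigma_1$ and propose $\eta_1, \dots, \eta_r$ as the candidate sequence of $n$-simplices. By the circuit condition these are pairwise distinct, and by the walk structure each consecutive pair $\eta_{z-1}, \eta_z$ shares the common $(n-1)$-face $\sigma_z$, so the first hypothesis of Theorem~\ref{3.3} (consecutive members share an $(n-1)$-face) holds at once. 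Likewise $\sigma_1$ is an $(n-1)$-simplex that is a face of both $\eta_1$ and $\eta_r$, since $\sigma_1,\sigma_2$ are faces of $\eta_1$ and $\sigma_r,\sigma_{r+1}=\sigma_1$ are faces of $\eta_r$. Everything then reduces to the length bound $r \geq 3$ and the single non-incidence needed for the second hypothesis of Theorem~\ref{3.3}.

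The combinatorial fact I would isolate first, which does essentially all the work, is that in a pure $n$-simplicial complex two distinct $n$-simplices share at most one $(n-1)$-face. This is a vertex count: if distinct $(n-1)$-faces $\sigma,\sigma'$ were each contained in two distinct $n$-simplices $\eta,\eta'$, then $\sigma\cup\sigma'$ would have at least $n+1$ vertices while lying inside each of $\eta,\eta'$, forcing $\eta = \sigma\cup\sigma' = \eta'$, a contradiction. From this single fact I get both needed consequences. First, it rules out $r=1$ (the walk condition already fails, as $\sigma_1 \neq \sigma_2$) and $r=2$ (where $\eta_1$ and $\eta_2$ would share the two distinct $(n-1)$-faces $\sigma_1$ and $\sigma_2$), so that $r \geq 3$ and there are at least three distinct $n$-simplices. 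Second, it shows $\sigma_1$ is not a face of $\eta_2$: otherwise $\eta_1$ and $\eta_2$ would again share the two distinct $(n-1)$-faces $\sigma_1$ and $\sigma_2$.

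With these in hand I would take $t=2$ in the second hypothesis of Theorem~\ref{3.3}: since $r \geq 3$ we have $2 \leq t = 2 \leq r-1$, and the $(n-1)$-simplex $\sigma_1$ is a face of $\eta_1$ and $\eta_r$ but not of $\eta_2$. Thus both hypotheses of Theorem~\ref{3.3} are satisfied by $\sigma_1$, and since the cycle produced by that theorem inherits the dimension $m$ of the distinguished simplex in its second condition (here $m = n-1$), I conclude that $K$ contains an $(n-1,n)$-simplicial cycle sequence.

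I do not anticipate a serious obstacle; the argument is short once Theorem~\ref{3.3} is available. The one point requiring care is recognising that the second hypothesis of Theorem~\ref{3.3} asks only for a \emph{single} intermediate index at which $\sigma_1$ fails to be a face, not for all of them, which is precisely what legitimises the choice $t=2$ and lets the at-most-one-shared-face fact close the argument. A secondary point worth stating explicitly is the dimension bookkeeping: because the distinguished simplex $\sigma_1$ is $(n-1)$-dimensional, the cycle delivered by Theorem~\ref{3.3} is an $(n-1,n)$-cycle, exactly as the corollary asserts.
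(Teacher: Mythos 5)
Your proof is correct, and it takes exactly the route the paper intends: the statement is presented as a corollary of Theorem~\ref{3.3}, and the paper omits the proof as ``trivial,'' so there is no written argument to compare against. Your writeup supplies the missing details soundly --- in particular the observation that two distinct $n$-simplices share at most one $(n-1)$-face, which simultaneously forces $r\geq 3$ and gives the non-incidence $\sigma_1\not\subseteq\eta_2$ needed for the second hypothesis of Theorem~\ref{3.3}, and your remark that the cycle produced by that theorem inherits the dimension $n-1$ of the distinguished simplex is the right piece of bookkeeping to make explicit.
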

Since the proof of Corollary~\ref{c 3.2} is trivial, it is omitted.

\begin{cor}\label{1cormn}
    Let $K$ be a connected, acyclic pure $n$-simplicial complex. Then, for every $0 \le m \le n-2$, there exists a reduced $(m,n)$-path sequence between any pair of $m$-simplices of $K$.
\end{cor}

\begin{proof}
   Let $\tau_i$ and $\tau_j$ be $m$-simplices of a connected, acyclic pure $n$-simplicial complex $K$, where $0 \le m \le n-2$. Let $\sigma_{i'}$ and $\sigma_{j'}$ be $(n-1)$-simplices of $K$ such that $\tau_i$ and $\tau_j$ are faces of $\sigma_{i'}$ and $\sigma_{j'}$, respectively. Assume that $\sigma_{i'}$ and $\sigma_{j'}$ are distinct. Since $K$ is connected, there exists an $(n-1,n)$-path sequence
$$
\sigma_{i'}=\sigma_1,\eta_1,\sigma_2,\dots,\sigma_r,\eta_r,\sigma_{r+1}=\sigma_{j'}.
$$

Let $p$ be the largest integer $q$ such that $1\le q\le r$ and $\tau_i$ is a face of $\eta_q$. Let $x$ be the smallest integer $y$ such that $p\le y\le r$ and $\tau_j$ is a face of $\eta_y$. We show that there exists a sequence of distinct $m$-simplices $\tau_{p+1},\dots,\tau_x$ such that $\tau_k$ is a face of $\sigma_k$ for all $p+1\le k\le x$.

Suppose, to the contrary, that for every sequence of $m$-simplices $\tau_{p+1},\dots,\tau_x$ satisfying $\tau_k\subset \sigma_k$ for all $p+1\le k\le x$, there exist indices $l_1$ and $l_2$ with $p+1\le l_1\neq l_2\le x$ such that $\tau_{l_1}=\tau_{l_2}$.

Since the $n$-simplices $\eta_p,\dots,\eta_x$ are distinct, for each $t$ with $p+1\le t\le x$, there exists a vertex $v_t$ of $\eta_t$ that is not a vertex of $\eta_{t-1}$. Consequently, $v_{t-1}$ is a vertex of $\sigma_t$ for all $p+2\le t\le x$.

Choose an $m$-face $\tau_{p+1}$ of $\sigma_{p+1}$. For each $t$ with $p+2\le t\le x$, choose an $m$-face $\tau_t$ of $\sigma_t$ containing the vertex $v_{t-1}$. By construction, any two consecutive $m$-simplices in the sequence are distinct, and $\tau_t$ is not a face of $\eta_{t-2}$ for
all $p+2 \le t \le x$. However, by assumption, $\tau_{l_1}=\tau_{l_2}$ for some $p+1\le l_1\neq l_2\le x$.

Therefore, there exists a sequence of at least three distinct $n$-simplices
$$
\eta_{l_1},\eta_{l_1+1},\dots,\eta_{l_2-1},
$$
satisfying conditions~(\ref{t3,3111}) and~(\ref{t3.3 2}) of Theorem~\ref{3.3}. This contradicts the acyclicity of $K$. Hence, our assumption is false. Therefore, there exists a sequence of distinct $m$-simplices $\tau_{p+1},\dots,\tau_x$ such that $\tau_k$ is a face of $\sigma_k$ for all $p+1\le k\le x$.

We thus obtain a reduced $(m,n)$-path sequence
$$
\tau_i,\eta_p,\tau_{p+1},\dots,\tau_x,\eta_x,\tau_j,
$$
between $\tau_i$ and $\tau_j$ in $K$.

\end{proof}

From Proposition~\ref{1propn-1} and Corollary~\ref{1cormn}, it follows that if a pure $n$-simplicial complex $K$ is connected and acyclic, then for all $0 \leq m \leq n-1$ there exists a reduced $(m,n)$-path sequence between any pair of $m$-simplices of $K$.

\section{Characterisations of $(n-1,n)$-trees using paths, cycles, and enumeration}\label{Simplicial trees}

We are now in a position to recover higher-dimensional analogues of the classical characterisations of graph-theoretic trees. Using the refined notions of connectivity, reduced paths, and Dewdney cycles developed in Section~\ref{Connectivity}, we now establish equivalences based on connectivity and acyclicity, uniqueness of paths, and enumerative constraints. Together, these results provide a structural characterisation of $(n-1,n)$-trees, fully paralleling the classical theory of trees in graphs.

We begin by generalising the result: A graph is a tree if and only if it is connected and acyclic.

\begin{theorem}
\label{3.5}
    A pure $n$-simplicial complex $K$ is an $(n-1,n)$-tree if and only if it is connected and acyclic.
\end{theorem}

\begin{proof}
Let $K$ be an $(n-1,n)$-tree with $\alpha_n(K)=t$. Then there exists a sequence of distinct $n$-simplices $\eta_1, \dots, \eta_t$ such that, for all $2 \leq i \leq t$, $\mathcal{A}(\eta_i, K_i)$ is a complete simplicial complex with $n$ vertices, where $K_i = \overline{\{\eta_1, \dots, \eta_i\}}$. By Theorem~\ref{3.2}, $K$ is connected.

Suppose there exists an $(m,n)$-Dewdney cycle sequence
\[
\sigma_i = \sigma_1, \eta'_1, \sigma_2, \dots, \sigma_s, \eta'_s, \sigma'_{s+1} = \sigma_i.
\]
Let
\[
j := \max \{\, q \le t \mid \eta_q \in \{\eta'_1,\dots,\eta'_s\} \,\}.
\]
Then $\mathcal{A}(\eta_j, K_j)$ is not a complete simplicial complex with $n$ vertices, leading to a contradiction.
Thus, our assumption that $K$ is cyclic is false, and hence $K$ is acyclic.

Conversely, let $K$ be a connected and acyclic pure $n$-simplicial complex with $\alpha_n(K) = t$. Since $K$ is connected, there exists a sequence $\eta_1, \dots, \eta_l$ of $n$-simplices in $K$, where $2 \leq l \leq t$, such that $\mathcal{A}(\eta_j, K_j)$ forms a complete simplicial complex with $n$ vertices for all $2 \leq j \leq l$. Here, $K_j = \overline{\{\eta_1, \dots, \eta_j\}}$ for all $2 \leq j \leq l$. Let $k$ be the largest integer $l$ with this property.

Assume $k < t$. Then there exists $\eta_p \in K^n \setminus \{\eta_1, \dots, \eta_k\}$. Let $\sigma_k$ be an $(n-1)$-face of $\eta_k$, and let $\sigma_p$ be an $(n-1)$-face of $\eta_p$ such that $\sigma_k \neq \sigma_p$. Consider an $(n-1,n)$-path sequence
\[
\sigma_p = \sigma'_1, \eta'_1, \sigma'_2, \dots, \sigma'_r, \eta'_r, \sigma'_{r+1} = \sigma_k.
\]

We now proceed by considering the following three cases.

\textbf{Case 1:} Suppose $\eta'_1 \in \{\eta_1,\dots,\eta_k\}$.

Let $\eta'_1 = \eta_h$ for some $1 \leq h \leq k$. Then $\eta_p$ and $\eta_h$ share a common $(n-1)$-face $\sigma_p$. By the choice of $k$, there exists a vertex $v_p \in V(\eta_p) \setminus V(\eta_h)$ and some $\eta_y \in \{\eta_1, \dots, \eta_k\} \setminus \{\eta_h\}$ such that $v_p \in V(\eta_y)$.

Since $\overline{\{\eta_1,\dots,\eta_k\}}$ is connected, there exists a sequence of distinct $n$-simplices
\[
\eta_y=\eta''_1,\eta''_2,\dots,\eta''_e=\eta_h,
\]
all belonging to $\{\eta_1,\dots,\eta_k\}$, such that, for all
$1 \leq u \leq e-1$, the simplices $\eta''_u$ and $\eta''_{u+1}$
share a common $(n-1)$-face. Thus, there exists a sequence of at least
three distinct $n$-simplices
\[
\eta''_1, \eta''_2, \dots, \eta''_e, \eta_p,
\]
satisfying conditions~(\ref{t3,3111}) and~(\ref{t3.3 2}) of
Theorem~\ref{3.3}. This contradicts the assumption that $K$ is acyclic.

\textbf{Case 2:} Suppose $\eta'_r \notin \{\eta_1,\dots,\eta_k\}$.

In this case, $\eta'_r$ and $\eta_k$ share a common $(n-1)$-face $\sigma_k$. Using a similar argument as in Case~1, we obtain a contradiction.

\textbf{Case 3:} Suppose $\eta'_1 \notin \{\eta_1,\dots,\eta_k\}$ and $\eta'_r \in \{\eta_1,\dots,\eta_k\}$.

Let $g$ be the largest integer $f$ such that $1 \leq f \leq r - 1$ and $\eta'_f \notin \{\eta_1, \dots, \eta_k\}$. Then $\eta'_g$ and $\eta'_{g+1}$ share a common $(n-1)$-face $\sigma'_{g+1}$, where $\eta'_{g+1} \in \{\eta_1, \dots, \eta_k\}$. Using a similar argument as in Case~1, we again obtain a contradiction.

Thus, our assumption that $k < t$ is false, and hence $k = t$.
\end{proof}

We now generalise the classical characterisation of a tree that a connected graph $G$ is a tree if and only if there exists a unique path between every pair of vertices.

\begin{theorem}\label{thmun}
 A connected pure $n$-simplicial complex is an $(n-1,n)$-tree if and only if it satisfies the following conditions:
\begin{enumerate}
    \item \label{un1}
    There exists a unique $(n-1,n)$-path sequence between any two $(n-1)$-simplices.

    \item \label{un2}
    For all $0 \le m \le n-2$, there exists a unique reduced simplicial path between any two $m$-simplices $\sigma_i$ and $\sigma_j$ that are not faces of a common joint $(n-1)$-simplex.

    \item \label{un3}
    For all $0 \le m \le n-2$, there exists no $(m,n)$-Dewdney cycle sequence.
\end{enumerate}

\end{theorem}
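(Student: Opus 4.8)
Since $K$ is assumed connected, and a simplicial tree is by definition connected and acyclic, the statement reduces to proving that $K$ is acyclic if and only if conditions~(\ref{un1})--(\ref{un3}) hold; I would establish the two implications separately. The backward direction is the quicker one. Assuming (\ref{un1})--(\ref{un3}), I want to exclude every simplicial cycle. For $n \ge 2$, condition~(\ref{un3}) already forbids $(m,n)$-simplicial cycle sequences for $0 \le m \le n-2$, and Remark~\ref{iq1} upgrades this to the absence of $(n-1,n)$-simplicial cycle sequences; since every simplicial cycle is the subcomplex of some $(m,n)$-simplicial cycle sequence, $K$ is acyclic. When $n = 1$, conditions~(\ref{un2}) and~(\ref{un3}) are vacuous, so I would instead invoke~(\ref{un1}): a $(0,1)$-simplicial cycle would produce two distinct reduced $(0,1)$-path sequences between a pair of its vertices, contradicting uniqueness.

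For the forward direction, assume $K$ is acyclic. Condition~(\ref{un3}) is immediate, as acyclicity means $K$ has no simplicial cycle and hence no $(m,n)$-simplicial cycle sequence for any $m$. For~(\ref{un1}), existence of a reduced $(n-1,n)$-path sequence between any two $(n-1)$-simplices follows from connectedness and Lemma~\ref{l 3.1}. For uniqueness I would argue by contradiction: given two distinct reduced $(n-1,n)$-path sequences $W_x$ and $W_y$ between $\sigma_i$ and $\sigma_j$, either one depends on the other---whence Lemma~\ref{l 3.2}(1) forces $W_x = W_y$, a contradiction---or they are independent. In the independent case I would locate the first position at which the two sequences diverge and the first position at which they subsequently reconverge; the two internally disjoint arcs between these positions form an $(n-1,n)$-circuit sequence, which by Corollary~\ref{c 3.2} yields an $(n-1,n)$-simplicial cycle, contradicting acyclicity. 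Condition~(\ref{un2}) is treated in the same spirit: existence of a reduced simplicial path between two $m$-simplices (with $0 \le m \le n-2$) comes from Proposition~\ref{l2-1}, the dependent case of uniqueness from Lemma~\ref{l 3.2}(2), and the independent case must again be excluded by producing a simplicial cycle.

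The main obstacle is the independent case of~(\ref{un2}). Here the $(n-1,n)$-circuit argument does not transfer directly, because two distinct $n$-simplices sharing a single $(n-1)$-face give two independent reduced simplicial paths with no cycle at all; this degeneracy is exactly what the hypothesis that $\sigma_i$ and $\sigma_j$ are not faces of a common joint $(n-1)$-simplex is designed to exclude. The plan is to assemble the $n$-simplices of the two independent simplicial paths, together with the connecting $(n-1)$-faces supplied by condition~(\ref{cd3}), into a single chain of $n$-simplices, and then verify the hypotheses of Theorem~\ref{3.3}: that the chain contains at least three distinct $n$-simplices, and that some $m$-simplex is a face of the two extreme $n$-simplices but not of an intermediate one. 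The excluded-degeneracy hypothesis is what guarantees both that the chain is long enough and that such a witnessing $m$-simplex exists, so that Theorem~\ref{3.3} produces the required $(m,n)$-simplicial cycle. Some additional care is also needed in the reconvergence argument for~(\ref{un1}) to confirm that the extracted circuit has pairwise distinct $n$-simplices and $(n-1)$-faces, as demanded by the definition of an $(n-1,n)$-circuit sequence.
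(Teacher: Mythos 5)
Your proposal is correct in outline and, for the forward direction, follows essentially the same strategy as the paper: existence from connectedness via Lemma~\ref{l 3.1} and Proposition~\ref{l2-1}, the dependent case via Lemma~\ref{l 3.2}, and the independent case excluded by manufacturing an $(n-1,n)$-circuit sequence or a chain satisfying Theorem~\ref{3.3}, with Corollary~\ref{c 3.1} handling the degenerate single-simplex situation that the ``no common joint $(n-1)$-simplex'' hypothesis exists to control. Where you genuinely diverge is the backward direction: the paper derives acyclicity by assuming an $(n-1,n)$-simplicial cycle sequence and extracting from it two independent reduced $(n-1,n)$-path sequences, contradicting condition~(\ref{un1}); you instead observe that for $n\geq 2$ condition~(\ref{un3}) together with Remark~\ref{iq1} (the contrapositive of Corollary~\ref{pr1}) already forbids $(n-1,n)$-simplicial cycle sequences, so condition~(\ref{un1}) is not needed at all in that direction, and you fall back on the paper's independent-paths argument only when $n=1$, where~(\ref{un2}) and~(\ref{un3}) are vacuous. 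Your route is shorter and arguably cleaner, and it makes explicit that for $n\geq 2$ connectedness plus~(\ref{un3}) alone suffice for the ``if'' direction. The one caveat is that you leave the hardest part of the forward direction---verifying that the concatenated arcs really form a legitimate circuit sequence with pairwise distinct $n$-simplices and pairwise distinct connecting $(n-1)$-faces---as a plan; the paper's proof spends the bulk of its length on exactly this, splitting into cases according to where the two sequences first diverge and reconverge and whether the auxiliary $(n-1)$-simplices $\sigma'_z$ and $\tau'_k$ coincide (its Cases 1.1, 1.2, 2.1, 2.2.1--2.2.4). You correctly flag that this care is needed, but executing it is where the real work lies.
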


\begin{proof}

Fix an integer $m$ with $0 \le m \le n-2$. Now consider two reduced $(m,n)$-path sequences
\[
\sigma_i = \sigma_1, \eta_1, \sigma_2, \dots, \sigma_r, \eta_r, \sigma_{r+1} = \sigma_j
\]
and
\[
\sigma_i = \tau_1, \eta'_1, \tau_2, \dots, \tau_s, \eta'_s, \tau_{s+1} = \sigma_j
\]
between the $m$-simplices $\sigma_i$ and $\sigma_j$ in an $(n-1,n)$-tree $K$. For all $2 \le z \le r$, there exists an $(n-1)$-simplex $\sigma'_z$ in $K$ satisfying condition~(\ref{cd3}) of the definition of a reduced $(m,n)$-path sequence. Similarly, for all $2 \le k \le s$, there exists an $(n-1)$-simplex $\tau'_k$ in $K$ satisfying condition~(\ref{cd3}).

Let $r = s = 1$. Since $\sigma_i$ and $\sigma_j$ are not faces of a common joint $(n-1)$-simplex, it follows that $\sigma_i \cup \sigma_j$ is not a face of any joint $(n-1)$-simplex. Using the fact that $\sigma_i \cup \sigma_j$ is a face of both $\eta_1$ and $\eta'_1$, and by Corollary~\ref{c 3.1}, we conclude that $\eta_1 = \eta'_1$.

Now let $r \le s$ with $s \ge 2$, and assume that there exists an index $h$ such that
\[
\eta_h \neq \eta'_l \quad \text{for all } 1 \le l \le s.
\]
Let $a$ be the least such integer.

\textbf{Case 1:} $a = 1$.

\textbf{Case 1.1:} Assume that for all $2 \le l_1 \le r$ and $2 \le l_2 \le s$, 
$\eta_{l_1} \neq \eta'_{l_2}$.

If either $\eta_1$ and $\eta'_1$, or $\eta_r$ and $\eta'_s$, share a common $(n-1)$-face, then there exists a sequence of at least three distinct $n$-simplices
\[
\eta_r, \dots, \eta_1, \eta'_1, \dots, \eta'_s
\quad \text{or} \quad
\eta_1, \dots, \eta_r, \eta'_s, \dots, \eta'_1,
\]
satisfying conditions~(\ref{t3,3111}) and~(\ref{t3.3 2}) of Theorem~\ref{3.3}. 
This contradicts the acyclicity of $K$.

Hence assume that neither $\eta_1$ and $\eta'_1$, nor $\eta_r$ and $\eta'_s$, share a common $(n-1)$-face.

Let $\beta_p$ be an $(n-1)$-face of $\eta_r$ containing $\sigma_j$, and let $\beta_q$ be an $(n-1)$-face of $\eta'_s$ containing $\sigma_j$. Then $\beta_p \neq \beta_q$. Consider an $(n-1,n)$-path sequence
\[
\beta_p = \beta_1, \eta''_1, \beta_2, \dots, \beta_t, \eta''_t, \beta_{t+1} = \beta_q.
\]

Assume $t \geq 3$. If $\sigma_j$ is not a face of some $\eta''_l$ with $2 \le l \le t-1$, then, by Theorem~\ref{3.3}, we again obtain a contradiction to the acyclicity of $K$.

Suppose instead that $\sigma_j$ is a face of every $\eta''_l$, $1 \le l \le t$. 
Possibly $\eta''_1 = \eta_r$ or $\eta''_t = \eta'_s$; however, the remaining $t-2$ $n$-simplices are distinct from those in the two reduced path sequences.

Concatenate
\[
\eta_1,\dots,\eta_r,\eta''_1,\dots,\eta''_t,\eta'_s,\dots,\eta'_1
\]
and remove repetitions while preserving first occurrences. This yields a sequence
\[
\eta_1,\dots,\eta_r,\eta''_{a_1},\dots,\eta''_{a_x},\eta'_s,\dots,\eta'_1,
\]
where 
\[
\{\eta''_{a_1},\dots,\eta''_{a_x}\} \subseteq \{\eta''_1,\dots,\eta''_t\},
\]
consisting of at least three distinct $n$-simplices satisfying conditions~(\ref{t3,3111}) and~(\ref{t3.3 2}). 
This again contradicts the acyclicity of $K$.

A similar argument applies when $t=1$ or $t=2$, yielding the same contradiction.

\textbf{Case 1.2:} Suppose that for some $2 \le l_1 \le r$ and $2 \le l_2 \le s$, $\eta_{l_1} = \eta'_{l_2}$.

Let $b$ be the smallest integer $l_1$ such that $2 \le l_1 \le r$ and $\eta_{l_1} = \eta'_{l_2}$ for some $2 \le l_2 \le s$. Then there exists a sequence of at least three distinct $n$-simplices,
\[
\eta_1, \dots, \eta_b, \eta'_{l_2-1}, \dots, \eta'_1,
\]
satisfying conditions~(\ref{t3,3111}) and~(\ref{t3.3 2}) of Theorem~\ref{3.3}. Thus, we obtain a contradiction to the acyclicity of $K$.

\textbf{Case 2:} Suppose $a \ge 2$. Then $\eta_{a-1} = \eta'_c$ for some $1 \le c \le s-1$.

\textbf{Case 2.1:} Suppose that for any $a+1 \le l_1 \le r$ and $c+1 \le l_2 \le s$, $\eta_{l_1} \neq \eta'_{l_2}$.

In this case, there exists a sequence of at least three distinct $n$-simplices
\[
\eta_r, \dots, \eta_{a-1}, \eta'_{c+1}, \dots, \eta'_s
\]
satisfying conditions~(\ref{t3,3111}) and~(\ref{t3.3 2}) of Theorem~\ref{3.3}. Thus, we obtain a contradiction to the acyclicity of $K$.

\textbf{Case 2.2:} Suppose that for some $a+1 \le l_1 \le r$ and $c+1 \le l_2 \le s$, $\eta_{l_1} = \eta'_{l_2}$.

Let $e$ be the smallest integer $l_1$ such that $a+1 \le l_1 \le r$ and $\eta_{l_1} = \eta'_{l_2}$ for some $c+1 \le l_2 \le s$.

\textbf{Case 2.2.1:} Suppose $\sigma'_a = \tau'_{c+1}$ and, for any $a+1 \le m_1 \le e$ and $c+2 \le m_2 \le l_2$, $\sigma'_{m_1} \neq \tau'_{m_2}$.

Then there exists an $(n-1,n)$-circuit sequence
\[
\sigma'_a, \eta_a, \sigma'_{a+1}, \dots, \sigma'_e, \eta_e, \tau'_{l_2}, \eta'_{l_2-1}, \tau'_{l_2-1}, \dots, \tau'_{c+2}, \eta'_{c+1}, \tau'_{c+1}.
\]
Therefore, by Corollary~\ref{c 3.2}, we obtain a contradiction.

\textbf{Case 2.2.2:} Suppose $\sigma'_a = \tau'_{c+1}$ and, for some $a+1 \le m_1 \le e$ and $c+2 \le m_2 \le l_2$, $\sigma'_{m_1} = \tau'_{m_2}$.

Let $d$ be the least integer $m_1$ such that $a+1 \le m_1 \le e$ and $\sigma'_{m_1} = \tau'_{m_2}$ for some $c+2 \le m_2 \le l_2$. Then there exists an $(n-1,n)$-circuit sequence
\[
\sigma'_a, \eta_a, \sigma'_{a+1}, \dots, \sigma'_{d-1}, \eta_{d-1}, \sigma'_d, \eta'_{m_2-1}, \tau'_{m_2-1}, \dots, \tau'_{c+2}, \eta'_{c+1}, \tau'_{c+1}.
\]
Therefore, by Corollary~\ref{c 3.2}, we obtain a contradiction.

\textbf{Case 2.2.3:} Suppose $\sigma'_a \neq \tau'_{c+1}$ and, for any $a \le m_1 \le e$ and $c+1 \le m_2 \le l_2$, $\sigma'_{m_1} \neq \tau'_{m_2}$.

Then there exists an $(n-1,n)$-circuit sequence
\[
\tau'_{c+1}, \eta_{a-1}, \sigma'_a, \eta_a, \sigma'_{a+1}, \dots, \sigma'_e, \eta_e, \tau'_{l_2}, \eta'_{l_2-1}, \tau'_{l_2-1}, \dots, \tau'_{c+2}, \eta'_{c+1}, \tau'_{c+1}.
\]
Therefore, by Corollary~\ref{c 3.2}, we obtain a contradiction.

\textbf{Case 2.2.4:} Suppose $\sigma'_a \neq \tau'_{c+1}$ and, for some $a\le m_1 \le e$ and $c+1 \le m_2 \le l_2$, $\sigma'_{m_1} = \tau'_{m_2}$.

Let $g$ be the least integer $m_1$ such that $a \le m_1 \le e$ and $\sigma'_{m_1} = \tau'_{m_2}$ for some $c+1 \le m_2 \le l_2$. Then there exists an $(n-1,n)$-circuit sequence
\[
\tau'_{c+1}, \eta_{a-1}, \sigma'_a, \dots, \sigma'_{g-1}, \eta_{g-1}, \sigma'_g, \eta'_{m_2-1}, \tau'_{m_2-1}, \dots, \tau'_{c+2}, \eta'_{c+1}, \tau'_{c+1}.
\]
Therefore, by Corollary~\ref{c 3.2}, we obtain a contradiction.

In all cases, we obtain a contradiction to the acyclicity of $K$. Thus, the assumption that there exists an index $h$ such that $\eta_h \neq \eta'_l$ for all $1 \le l \le s$ is false. Consequently,
\[
\{\eta_1,\dots,\eta_r\} \subseteq \{\eta'_1,\dots,\eta'_s\}.
\]
By Lemma~\ref{l 3.2}, it follows that $\eta_u = \eta'_u$ for all $1 \le u \le r$.

An analogous argument shows that there exists a unique $(n-1,n)$-path sequence between any two $(n-1)$-simplices in $K$.

Conversely, let $K$ be a connected pure $n$-simplicial complex satisfying conditions~(\ref{un1}), (\ref{un2}), and~(\ref{un3}). Assume there exists an $(n-1,n)$-Dewdney cycle sequence
\[
\sigma_1, \eta_1, \sigma_2, \dots, \sigma_r, \eta_r, \sigma_1.
\]
Then $\sigma_1, \eta_1, \sigma_2$ and $\sigma_1, \eta_r, \sigma_r, \dots, \sigma_3, \eta_2, \sigma_2$ are distinct $(n-1,n)$-path sequences, yielding a contradiction. Therefore, $K$ is acyclic and hence an $(n-1,n)$-tree.

\end{proof}

From the proof of the converse part of Theorem~\ref{thmun}, we obtain the following corollary.

\begin{cor}
Let $K$ be a connected pure $n$-simplicial complex satisfying the following properties:
\begin{enumerate}
    \item \label{cc1} There exists a unique $(n-1)$-path sequence between any two $(n-1)$-simplices.
    \item \label{cc2} For all $0 \le m \le n-2$, there exists no $(m,n)$-Dewdney cycle sequence.
\end{enumerate}
Then $K$ is an $(n-1,n)$-tree.
\end{cor}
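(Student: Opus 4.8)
The plan is straightforward: since $K$ is assumed connected and a simplicial tree is by definition a connected acyclic complex, it suffices to prove that $K$ admits no $(m,n)$-simplicial cycle sequence for any $0 \leq m \leq n-1$. I would split this into the two ranges $0 \leq m \leq n-2$ and $m = n-1$, dispatching each with exactly one of the two hypotheses. This mirrors the structure already present in the converse part of Theorem~\ref{thmun}.

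For $0 \leq m \leq n-2$ the absence of an $(m,n)$-simplicial cycle sequence is immediate from hypothesis~(\ref{cc2}). The substance lies in the top-dimensional case $m = n-1$, which I would handle by contradiction. Suppose $K$ contains an $(n-1,n)$-simplicial cycle sequence $\sigma_1, \eta_1, \sigma_2, \dots, \sigma_r, \eta_r, \sigma_1$, necessarily with $r \geq 3$. Letting $p$ be the largest index $q$ with $2 \leq q \leq r$ for which $\sigma_2$ is a face of $\eta_q$ (so $p \geq 2$, since $\sigma_2$ is already a face of $\eta_2$), I would exhibit the two reduced $(n-1,n)$-path sequences $\sigma_1, \eta_1, \sigma_2$ and $\sigma_1, \eta_r, \sigma_r, \dots, \sigma_{p+1}, \eta_p, \sigma_2$ between $\sigma_1$ and $\sigma_2$. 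Their reduced simplicial paths are $\overline{\{\eta_1\}}$ and $\overline{\{\eta_p, \dots, \eta_r\}}$; since $p \geq 2$ and the $\eta_i$ are distinct, we have $\eta_1 \notin \{\eta_p, \dots, \eta_r\}$ and $\eta_r \neq \eta_1$, so neither reduced simplicial path is a subcomplex of the other. The two sequences are therefore independent, in particular distinct, which contradicts the uniqueness asserted by hypothesis~(\ref{cc1}). Hence no $(n-1,n)$-simplicial cycle sequence exists, $K$ is acyclic, and being connected it is a simplicial tree.

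The step requiring the most care is confirming that the second sequence is a genuine \emph{reduced} $(n-1,n)$-path sequence in the sense of Definition~\ref{1 d}, not merely a walk. Condition~(\ref{cd2}) must be checked at both ends: the maximality of $p$ ensures that $\sigma_2$ is a face of none of $\eta_{p+1}, \dots, \eta_r$, so $\sigma_2$ is a face of only $\eta_p$ among the $n$-simplices of the second path; and the simplicial-cycle requirement that $\sigma_1$ is not a face of any intermediate $\eta_k$ for $2 \leq k \leq r-1$ forces $\sigma_1$ to be a face of only $\eta_r$. Condition~(\ref{cd3}) transfers directly from the cycle data, since for $m = n-1$ each witnessing $(n-1)$-simplex $\sigma'_z$ coincides with $\sigma_z$ and these are pairwise distinct. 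As this is precisely the verification already carried out in the converse of Theorem~\ref{thmun}, I would cite that argument rather than reproduce it.

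Finally, I would point out that hypothesis~(\ref{un2}) of Theorem~\ref{thmun} is never invoked above, which is exactly why the corollary needs only the two hypotheses~(\ref{cc1}) and~(\ref{cc2}). I would also stress that~(\ref{cc1}) cannot be dropped, even though for $n \geq 2$ the top-dimensional case could alternatively be deduced from~(\ref{cc2}) via Remark~\ref{iq1}: when $n = 1$ the range $0 \leq m \leq n-2$ is empty, hypothesis~(\ref{cc2}) is vacuous, and Remark~\ref{iq1} does not apply, so the uniqueness hypothesis~(\ref{cc1}) is genuinely needed to exclude $(0,1)$-cycles and recover the classical graph-theoretic statement.
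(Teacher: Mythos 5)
Your proposal is correct and is essentially the paper's own argument: the paper derives this corollary directly from the converse part of the proof of Theorem~\ref{thmun}, which rules out $(n-1,n)$-simplicial cycle sequences by extracting two independent reduced $(n-1,n)$-path sequences $\sigma_1,\eta_1,\sigma_2$ and $\sigma_1,\eta_r,\sigma_r,\dots,\sigma_{p+1},\eta_p,\sigma_2$ from a hypothetical cycle, exactly as you do, with hypothesis~(\ref{cc2}) handling the remaining dimensions. Your additional verification of the reduced-path conditions and the remark about the $n=1$ case are consistent elaborations rather than a different route.
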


The following example shows that condition (\ref{un3}) in Theorem~\ref{thmun} is necessary and cannot be omitted. In Figure~\ref{fig9}, the simplicial complex 
\[
K = \overline{\{\{1, 2, 3\}, \{2, 3, 4\}, \{3, 4, 5\}, \{1, 4, 5\}, \{1, 2, 6\}, \{1, 3, 7\}, \{1, 4, 9\}, \{1, 5, 8\}\}}
\]
is connected and satisfies conditions (\ref{un1}) and (\ref{un2}) of Theorem~\ref{thmun}, which can be verified directly from the structure of $K$. However, the $(0,2)$-walk sequence
\[
\{1\}, \{1,2,3\}, \{2\}, \{2,3,4\}, \{3\}, \{3,4,5\}, \{5\}, \{1,4,5\}, \{1\}
\]
forms a $(0,2)$-Dewdney cycle sequence in \( K \). Consequently, \( K \) is not an $(1,2)$-tree. Therefore, if condition (\ref{un3}) of Theorem~\ref{thmun} is dropped, then the converse of Theorem~\ref{thmun} fails.

\begin{figure}[h] 
    \centering
    \includegraphics[width=0.8\textwidth]{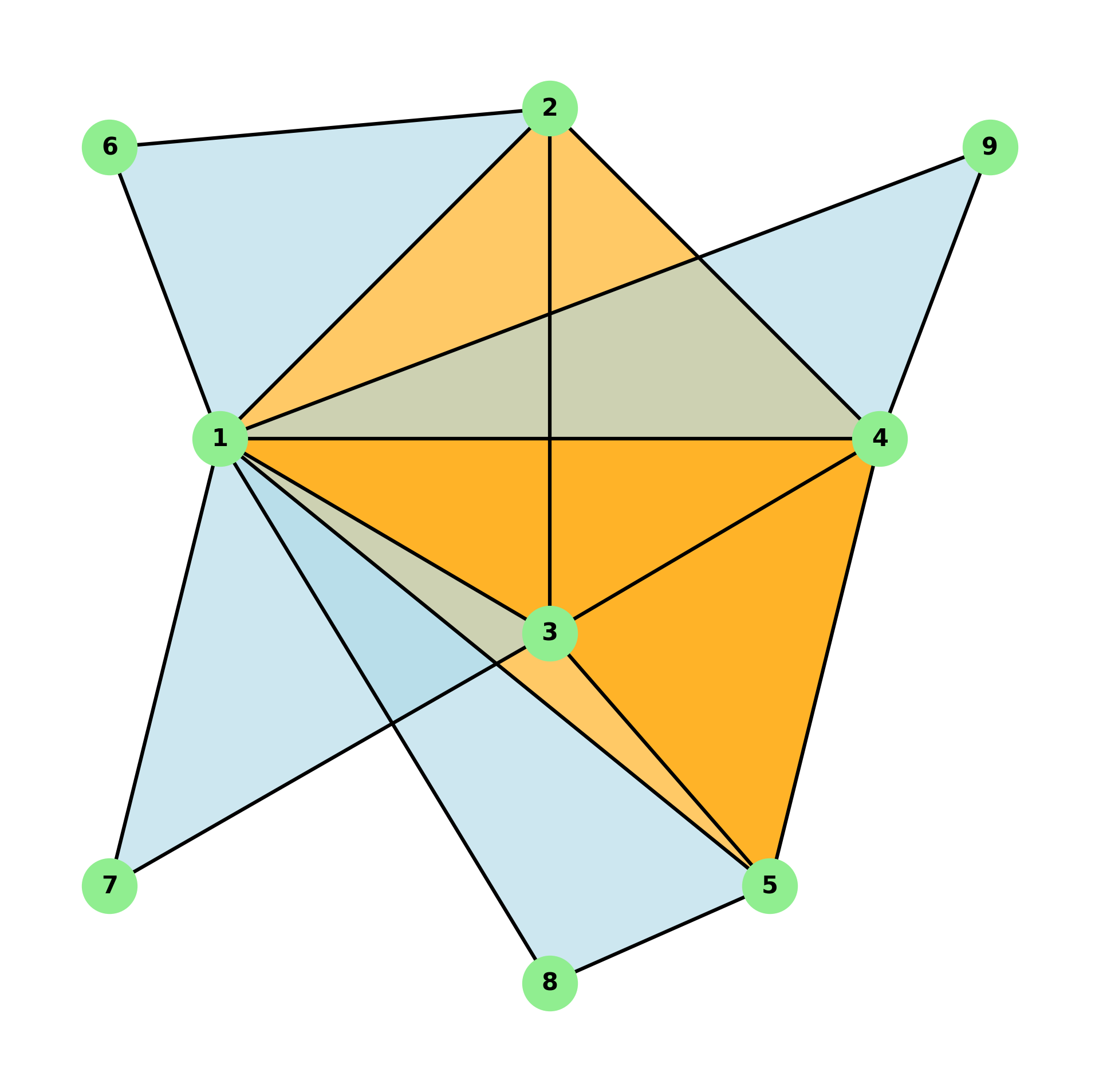} 
    \caption{A connected pure $2$-simplicial complex satisfying conditions~(\ref{un1}) and~(\ref{un2}) of Theorem~\ref{thmun}, but containing a $(0,2)$-Dewdney cycle sequence. This example shows that condition~(\ref{un3}) is necessary for the converse direction.}
    \label{fig9} 
\end{figure}

We now turn to enumerative characterisations, completing the analogy with graph-theoretic trees. The following results, namely Theorem~\ref{1 tree simplices information}, Lemma~\ref{l 3.3}, and Lemma~\ref{l 3.4}, which are central to our work, are directly adapted from Dewdney's study of $(m,n)$-trees~\cite{dewdney1974higher} in the case $m=n-1$. 

\begin{theorem}\label{1 tree simplices information}
    If $K$ is an $(n-1,n)$-tree with $p$ vertices, then for all $1 \leq k \leq n$, $$\alpha_k(K) = (p - n) \binom{n}{k} + \binom{n}{k+1}.$$

\end{theorem}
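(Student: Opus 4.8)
The plan is to proceed by induction on the number of vertices $p$, using the inductive description of pure $n$-simplicial trees recorded just above the statement: a single $n$-simplex is a simplicial tree, and every simplicial tree on $p > n+1$ vertices arises from one on $p-1$ vertices by attaching a new $n$-simplex that shares exactly one $(n-1)$-face with it. This construction is tailor-made for a vertex-count induction, since each step increases the vertex number by exactly one.

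For the base case $p = n+1$, the complex $K$ is a single $n$-simplex, so its $k$-simplices are exactly the $(k+1)$-element subsets of its $n+1$ vertices; hence $\alpha_k(K) = \binom{n+1}{k+1}$. I would then verify that the claimed formula reduces to this value: setting $p = n+1$ gives $(p-n)\binom{n}{k} + \binom{n}{k+1} = \binom{n}{k} + \binom{n}{k+1}$, which equals $\binom{n+1}{k+1}$ by Pascal's identity. Thus the base case holds for every $1 \leq k \leq n$.

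For the inductive step, suppose the formula holds for every simplicial tree on $p-1$ vertices, and let $K = K' \cup \overline{\{\eta\}}$ be obtained from such a tree $K'$ by attaching an $n$-simplex $\eta$ along a shared $(n-1)$-face $\sigma$. Since $\sigma$ has $n$ vertices, all lying in $K'$, and $\eta$ has $n+1$ vertices, $\eta$ contributes exactly one new vertex $v \notin K'$. The key counting step is to argue that the $k$-simplices of $K$ not already present in $K'$ are precisely the $k$-faces of $\eta$ containing $v$: a $k$-face of $\eta$ avoiding $v$ is a subset of the vertices of $\sigma$, hence a face of $\sigma$ and already in $K'$, whereas a $k$-face containing $v$ is genuinely new because $v$ is not a vertex of $K'$. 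There are $\binom{n}{k}$ such new faces, one for each choice of the remaining $k$ vertices from the $n$ vertices of $\sigma$, so $\alpha_k(K) = \alpha_k(K') + \binom{n}{k}$. Substituting the induction hypothesis $\alpha_k(K') = (p-1-n)\binom{n}{k} + \binom{n}{k+1}$ and simplifying yields $(p-n)\binom{n}{k} + \binom{n}{k+1}$, completing the induction.

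The step I expect to require the most care is the new-face count, namely confirming that the overlap between the attached simplex and the existing tree is exactly $\overline{\{\sigma\}}$, with no further coincidences. This follows cleanly once one combines the fact that $v \notin K'$ (which forces every new-vertex face to be new) with the simpliciality of $K'$ (which guarantees every face of $\sigma$ already belongs to $K'$); but it is precisely the place where the exact form of the inductive construction — one shared $(n-1)$-face and one new vertex — is invoked, so it deserves an explicit justification rather than being taken for granted.
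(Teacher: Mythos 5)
Your proposal is correct and follows essentially the same route as the paper: induction on $p$ via the $(n-1)$-complete ordering (equivalently, the inductive attachment description), with base case a single $n$-simplex. The only cosmetic difference is that you count the $\binom{n}{k}$ new $k$-faces as those containing the new vertex, while the paper writes the same quantity as $\binom{n+1}{k+1}-\binom{n}{k+1}$.
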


\noindent \textbf{Remark:} The formula in Theorem~\ref{1 tree simplices information} also holds for $k=0$, yielding $\alpha_0(K)=p$.

\begin{lemma}
\label{l 3.3}
    If $K$ is an $(n-1,n)$-tree and $M$ is a $k$-complete subcomplex with $p$ vertices, where $1 \leq k \leq n \text{ and }  k+1 \leq p\leq n + 1$, then there exists an $n$-simplex $\eta_q$ in $K$ such that $M$ is a subcomplex of $\overline{\{\eta_q\}}$.
\end{lemma}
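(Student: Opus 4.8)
The plan is to induct on the number $t=\alpha_n(K)$ of top-dimensional simplices, exploiting the inductive description of simplicial trees furnished by Theorem~\ref{3.5}: $K$ admits an $(n-1)$-complete ordering $\eta_1,\dots,\eta_t$ in which each $\eta_i$ (for $i\geq 2$) attaches to $K_{i-1}=\overline{\{\eta_1,\dots,\eta_{i-1}\}}$ along a single $(n-1)$-face and contributes exactly one new vertex $v_i$ that is not a vertex of any $\eta_j$ with $j<i$. For the base case $t=1$, $K=\overline{\{\eta_1\}}$ is a single $n$-simplex on $n+1$ vertices, and any $k$-complete $M$ with $p\leq n+1$ vertices has all of its vertices among those of $\eta_1$, whence $M\subseteq\overline{\{\eta_1\}}$ trivially.

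For the inductive step I would peel off the last simplex $\eta_t$ together with its unique new vertex $v_t$, after first recording two structural facts. First, $\eta_t$ is the \emph{only} $n$-simplex of $K$ containing $v_t$, since every other $n$-simplex is some $\eta_j$ with $j<t$ and none of those contain $v_t$. Second, every simplex of $K$ that avoids $v_t$ already lies in $K_{t-1}$: a face of $\eta_t$ not containing $v_t$ lies inside the shared $(n-1)$-face $\eta_t\setminus\{v_t\}$, which belongs to some earlier $\eta_i$, while any simplex not a face of $\eta_t$ is a face of some $\eta_j$ with $j<t$ by purity.

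The argument then splits on whether $v_t\in V(M)$. If $v_t\notin V(M)$, the second fact shows every simplex of $M$ lies in $K_{t-1}$, so $M$ is a $k$-complete subcomplex of $K_{t-1}$; since $\eta_1,\dots,\eta_{t-1}$ is again an $(n-1)$-complete ordering, $K_{t-1}$ is a simplicial tree by Theorem~\ref{3.5}, and the inductive hypothesis supplies the required $n$-simplex. If $v_t\in V(M)$, I claim $V(M)\subseteq V(\eta_t)$: given any $w\in V(M)$ with $w\neq v_t$, the $k$-completeness of $M$ together with $p\geq k+1$ lets me select a $(k+1)$-element subset of $V(M)$ containing both $v_t$ and $w$, which is a $k$-simplex $\tau$ of $M\subseteq K$; by purity $\tau$ is a face of some $n$-simplex, and by the first fact that $n$-simplex must be $\eta_t$, so $w\in V(\eta_t)$. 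Hence $V(M)\subseteq V(\eta_t)$, every simplex of $M$ is a face of $\eta_t$, and $M\subseteq\overline{\{\eta_t\}}$, so $\eta_q=\eta_t$ works.

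The main obstacle is not the induction itself but establishing the two attachment facts cleanly, in particular the second one, that deleting $\eta_t$ removes only simplices passing through $v_t$; this is exactly where the precise meaning of an $(n-1)$-complete ordering (the attachment being complete on exactly $n$ vertices, i.e.\ a single shared $(n-1)$-face) is used. Once these are in place, both cases are short, and the only quantitative point to verify is that $p\geq k+1$ guarantees a $(k+1)$-subset of $V(M)$ containing any prescribed pair $\{v_t,w\}$.
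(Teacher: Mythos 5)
Your proof is correct and follows essentially the same route as the paper's: induction on $t=\alpha_n(K)$ via an $(n-1)$-complete ordering, splitting on whether $M$ contains the unique new vertex $v_t$ of the last simplex $\eta_t$, and using the completeness of $M$ together with the fact that $v_t$ lies in no $n$-simplex other than $\eta_t$ to force $V(M)\subseteq V(\eta_t)$ in that case. The only cosmetic difference is that the paper argues the second case by contradiction through the $1$-simplex $\{v_t,v_y\}$ rather than directly through a $k$-simplex containing $v_t$ and $w$.
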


\begin{lemma}
\label{l 3.4}
    If $K$ is a connected pure $n$-simplicial complex with $p$ vertices, then  $$\alpha_k(K) \geq (p-n)\binom{n}{k}+\binom{n}{k+1} \text{ for all } 1 \leq k \leq n-1.$$ The equality holds only if $K$ is an $(n-1,n)$-tree. 
\end{lemma}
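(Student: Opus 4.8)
The plan is to exploit the incremental structure that connectivity provides. Since $K$ is connected, by Theorem~\ref{3.2} it admits an $(n-1)$-ordering $\eta_1,\dots,\eta_t$ with $t=\alpha_n(K)$; writing $K_i=\overline{\{\eta_1,\dots,\eta_i\}}$, I would build $K$ one $n$-simplex at a time and track both the vertex count $p_i$ and the face counts $\alpha_k(K_i)$. Because $\mathcal{A}(\eta_i,K_i)$ is $(n-1)$-dimensional, each $\eta_i$ (for $i\geq 2$) shares at least one $(n-1)$-face with $K_{i-1}$, so at most one vertex of $\eta_i$ is new. This splits every step into a \emph{tree attachment}, introducing exactly one new vertex $v$ (equivalently, sharing exactly one $(n-1)$-face), and a \emph{non-tree attachment}, in which all $n+1$ vertices of $\eta_i$ already lie in $K_{i-1}$.

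The central computation is the per-step increment. For a tree attachment, every face of $\eta_i$ avoiding $v$ already lies in $K_{i-1}$ (it is a face of the shared $(n-1)$-face), whereas every face containing $v$ is genuinely new; hence exactly $\binom{n}{k}$ new $k$-simplices are created, while $p$ increases by $1$. For a non-tree attachment, $p$ is unchanged and the number of new $k$-simplices is merely $\geq 0$. Setting $f(p)=(p-n)\binom{n}{k}+\binom{n}{k+1}$, I would check the base case $K_1$ (a single $n$-simplex): $\alpha_k(K_1)=\binom{n+1}{k+1}=\binom{n}{k}+\binom{n}{k+1}=f(n+1)$ by Pascal's identity, with equality. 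Since each step satisfies $\Delta\alpha_k\geq\binom{n}{k}\,\Delta p=\Delta f$ (equality at tree attachments, slack only possibly at non-tree attachments), telescoping from the base case yields $\alpha_k(K)\geq f(p)$ for every $1\leq k\leq n-1$.

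For the equality clause I would argue that if $\alpha_k(K)=f(p)$ for some fixed $k$ with $1\leq k\leq n-1$, then every increment is tight, forcing each non-tree attachment to create no new $k$-simplex. Suppose, toward a contradiction, that a non-tree attachment occurs, and let step $i$ be the first such. Then every attachment before $i$ is a tree attachment, so by the inductive description of simplicial trees $K_{i-1}$ is itself a simplicial tree. Tightness means all $k$-faces of $\eta_i$ already lie in $K_{i-1}$; since $k\geq 1$, every edge of $\eta_i$ is contained in some $k$-face, so all $\binom{n+1}{2}$ edges of $\eta_i$ are present, i.e.\ the $n+1$ vertices of $\eta_i$ form an $(n+1)$-clique in the $1$-skeleton of $K_{i-1}$. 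By the established correspondence between simplicial trees and $n$-trees, every $(n+1)$-clique of that $1$-skeleton is an $n$-simplex of $K_{i-1}$, forcing $\eta_i\in K_{i-1}$ --- impossible, since $\eta_i$ is distinct from $\eta_1,\dots,\eta_{i-1}$. Hence no non-tree attachment occurs, every $\eta_i$ is attached along a single $(n-1)$-face, and $K$ is a simplicial tree.

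I expect the main obstacle to be the bookkeeping in the per-step count: verifying cleanly that a tree attachment creates exactly $\binom{n}{k}$ new $k$-simplices (no fewer, since faces through $v$ are new; no more, since faces avoiding $v$ are old) and that the non-tree case contributes nonnegatively. The other delicate point is the equality step, where I must invoke the structural fact that a simplicial tree contains no "hollow" $n$-simplex, so that having all lower faces present forces the top simplex to be present. I would route this through the clique-complex correspondence recorded just before the lemma, rather than re-deriving it, to keep the argument short.
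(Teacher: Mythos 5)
Your proof is correct and follows essentially the same route as the paper's: both arguments telescope $\alpha_k$ along an $(n-1)$-ordering guaranteed by Theorem~\ref{3.2}, observe that each attachment contributes at least $\binom{n}{k}$ times the number of new vertices, and obtain strictness at the first non-complete attachment from the fact that a simplicial tree contains no ``hollow'' $n$-simplex. The only cosmetic difference is that the paper applies Lemma~\ref{l 3.3} directly to the $k$-complete subcomplex formed by the $k$-faces of that offending $n$-simplex, whereas you pass to its edges and invoke the clique-complex correspondence, which amounts to the $k=1$ case of the same lemma.
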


The following Lemma~\ref{l 3.5} is introduced to refine Dewdney’s counting characterisations and to address the limitations identified in Theorem~\ref{1.1}.

\begin{lemma}
\label{l 3.5}
Let $K$ be a pure $n$-simplicial complex. Then:
\begin{enumerate}
    \item If $K$ has no $(n-1,n)$-circuit sequence, then $\alpha_{n-1}(K) \ge n \alpha_n(K) + 1$.
    \item If $K$ is connected, then $\alpha_{n-1}(K) \le n \alpha_n(K) + 1$.
\end{enumerate}
\end{lemma}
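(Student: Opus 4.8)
The plan is to translate both inequalities into elementary facts about a single auxiliary bipartite graph and then read them off from the standard edge counts for forests and for connected graphs. Define the \emph{incidence graph} $B$ whose vertex set is $K^{n-1}\cup K^{n}$, with an edge joining $\sigma\in K^{n-1}$ to $\eta\in K^{n}$ precisely when $\sigma$ is an $(n-1)$-face of $\eta$. Since every $n$-simplex has exactly $n+1$ distinct $(n-1)$-faces and each such face already lies in $K$, we obtain $|V(B)| = \alpha_{n-1}(K)+\alpha_n(K)$ and $|E(B)| = (n+1)\,\alpha_n(K)$. Both parts will then follow by comparing these two counts against the number of connected components $c(B)$.

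The first step is to set up the dictionary between the $(m,n)$-sequence language and $B$: an $(n-1,n)$-walk sequence $\sigma_1,\eta_1,\sigma_2,\dots,\eta_r,\sigma_{r+1}$ is exactly a walk in $B$, and an $(n-1,n)$-circuit sequence (closed, with distinct $\sigma$'s and distinct $\eta$'s) is exactly a cycle of $B$. Hence $B$ is acyclic if and only if $K$ has no $(n-1,n)$-circuit, which by Remark~\ref{Remark 1} is equivalent to the hypothesis of part~1. For part~1 I would then invoke the forest identity $|E(B)| = |V(B)| - c(B)$, which rearranges to $\alpha_{n-1}(K) = n\,\alpha_n(K) + c(B)$; since $c(B)\ge 1$ for a nonempty complex, this gives $\alpha_{n-1}(K) \ge n\,\alpha_n(K) + 1$.

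For part~2 I would show that connectivity of $K$ forces $B$ to be connected: each $n$-simplex is adjacent in $B$ to every one of its $(n-1)$-faces, and by $(n-1,n)$-connectedness any two $(n-1)$-simplices are joined by an $(n-1,n)$-path, i.e.\ a path in $B$, so all vertices of $B$ lie in one component. A connected graph satisfies $|E(B)| \ge |V(B)| - 1$, and this rearranges to $\alpha_{n-1}(K) \le n\,\alpha_n(K) + 1$. Alternatively, part~2 can be deduced directly from Theorem~\ref{3.2}: an $(n-1)$-ordering $\eta_1,\dots,\eta_t$ contributes $n+1$ new $(n-1)$-faces at $\eta_1$ and at most $n$ new ones at each later $\eta_i$, because $\mathcal{A}(\eta_i,K_i)$ being $(n-1)$-dimensional forces at least one $(n-1)$-face of $\eta_i$ to be shared with an earlier $n$-simplex.

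The two edge counts are routine, so the one point demanding care is the dictionary in the first step. In particular I would verify the degenerate short cases: two distinct $n$-simplices cannot share two distinct $(n-1)$-faces, since the union of two distinct $(n-1)$-faces of an $n$-simplex is the whole $n$-simplex, which would force the two simplices to coincide. Consequently $B$ carries no multi-edges and no $4$-cycles, so every genuine cycle of $B$ has length at least $6$ and corresponds to a circuit with $r\ge 3$, confirming that ``no $(n-1,n)$-circuit'' is exactly ``$B$ acyclic.'' I expect establishing this correspondence, together with the transfer of $K$-connectivity to $B$-connectivity, to be the main (though mild) obstacle; once it is in place, both inequalities are immediate from the component count $\alpha_{n-1}(K) = n\,\alpha_n(K) + c(B)$.
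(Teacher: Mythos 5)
Your argument is correct, and for part~1 it takes a genuinely different route from the paper. The paper proves part~1 by induction on $\alpha_n(K)$: it first shows (via a walk that must eventually repeat, producing an $(n-1,n)$-circuit) that an acyclic complex contains an $n$-simplex with at least $n$ ``private'' $(n-1)$-faces, deletes that simplex, and applies the inductive hypothesis. Your bipartite incidence graph $B$ replaces this leaf-extraction induction with a single global count: the dictionary ``$(n-1,n)$-circuit sequence $=$ cycle of $B$'' is exact (your observation that two distinct $n$-simplices cannot share two distinct $(n-1)$-faces correctly disposes of the $4$-cycle case, and the degree of each $n$-simplex in $B$ is exactly $n+1$ by purity), after which the forest identity $|E(B)|=|V(B)|-c(B)$ yields the sharper statement $\alpha_{n-1}(K)=n\,\alpha_n(K)+c(B)$, from which part~1 follows since $c(B)\geq 1$. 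This is cleaner than the paper's induction, avoids the somewhat delicate repeated-walk argument, and as a bonus identifies the defect $\alpha_{n-1}(K)-n\,\alpha_n(K)$ as the number of components, which is essentially the fact the paper later re-derives inside the proof of Theorem~\ref{thm5.5}. For part~2 your ``alternative'' deduction from the $(n-1)$-ordering of Theorem~\ref{3.2} is exactly the paper's proof ($\eta_1$ contributes $n+1$ faces, each later $\eta_i$ at most $n$ new ones), and your primary formulation via $|E(B)|\geq |V(B)|-1$ for connected $B$ is an equivalent repackaging. The only (harmless) caveat is the implicit assumption that $K$ is nonempty so that $c(B)\geq 1$, which you flag yourself.
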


\begin{proof}
Let $K$ be a pure $n$-simplicial complex with no $(n-1,n)$-circuit sequence and $\alpha_n(K)=t$. We prove $\alpha_{n-1}(K) \ge n \alpha_n(K) + 1$ by induction on $t$.

If $t = 1$ or $t = 2$, the result holds trivially. Assume $t \ge 3$, and that the result holds for any pure $n$-simplicial complex with no $(n-1,n)$-circuit sequence and $t-1$ $n$-simplices.

We first show that $K$ contains an $n$-simplex with at least $n$ $(n-1)$-faces, none of which is a face of any other $n$-simplex.

Suppose not. Then every $n$-simplex in $K$ shares at least two $(n-1)$-faces with other $n$-simplices. Begin with an $(n-1)$-face $\sigma_1$ of an $n$-simplex $\eta_1$. Next, choose another $(n-1)$-face $\sigma_2$ of $\eta_1$ that is also a face of another $n$-simplex $\eta_2$. Continue by selecting another $(n-1)$-face $\sigma_3$ of $\eta_2$ that is a face of yet another $n$-simplex $\eta_3$. This process forms a finite-length $(n-1,n)$-walk sequence
\[
\sigma_1, \eta_1, \sigma_2, \dots, \sigma_r, \eta_r, \sigma_{r+1},
\]
in which at least one $n$-simplex or $(n-1)$-simplex occurs more than once. We observe that any three consecutive $(n-1)$-simplices and $n$-simplices in the sequence are distinct. This follows from the construction, since at each step the $(n-1)$-simplex $\sigma_{s+1}$ is chosen to be distinct from $\sigma_s$, and the $n$-simplex $\eta_{s+1}$ is chosen to be distinct from $\eta_s$.

Let $k$ be the smallest integer $j$ such that $4 \le j \le r+1$ and $\sigma_j = \sigma_i$ for some $1 \le i \le j-3$. Similarly, let $l$ be the smallest integer $q$ such that $4 \le q \le r$ and $\eta_q = \eta_p$ for some $1 \le p \le q-3$.

If $k \le l$ or if $l$ does not exist, then the sequence
\[
\sigma_i, \eta_i, \sigma_{i+1}, \dots, \sigma_{k-1}, \eta_{k-1}, \sigma_k
\]
forms an $(n-1,n)$-circuit sequence, yielding a contradiction. If $k > l$ or if $k$ does not exist, then the sequence
\[
\sigma_l, \eta_p, \sigma_{p+1}, \dots, \sigma_{l-1}, \eta_{l-1}, \sigma_l
\]
forms an $(n-1,n)$-circuit sequence, again yielding a contradiction. Therefore, $K$ contains an $n$-simplex $\eta_s$ with at least $n$ $(n-1)$-faces that are not faces of any other $n$-simplex.

Let $L = \overline{\{\eta_1, \dots, \eta_t\} \setminus \{\eta_s\}}$. Then $L$ is a pure $n$-simplicial complex with $t-1$ $n$-simplices and no $(n-1,n)$-circuit sequence. By the induction hypothesis,
\[
\alpha_{n-1}(L) \ge n \alpha_n(L) + 1.
\]
Since $\eta_s$ has at least $n$ $(n-1)$-faces that are not faces of any other $n$-simplex, removing $\eta_s$ decreases the number of $(n-1)$-simplices by at least $n$. Therefore,
\[
\alpha_{n-1}(K) \ge n \alpha_n(L) + 1 + n = n \alpha_n(K) + 1.
\]

Now we prove the second part of the lemma. Let $\eta_1, \dots, \eta_t$ be an $(n-1)$-ordering of $K$. Then $\eta_1$ has $n+1$ $(n-1)$-faces, and for all $2 \le i \le t$, $\eta_i$ has at most $n$ $(n-1)$-faces that are not faces of $\eta_1, \dots, \eta_{i-1}$. Therefore,
\[
\alpha_{n-1}(K) \le n \alpha_n(K) + 1.
\]
\end{proof}

We now improve Dewdney’s enumerative characterisation given in Theorem~\ref{1.1} by showing that, in the case $m = n-1$, the required counting condition can be verified even at the top-dimensional level $k = n$.

\begin{theorem}
\label{3.7}
A pure $n$-simplicial complex $K$ with $p$ vertices is an $(n-1,n)$-tree if and only if it satisfies the following two properties:
\begin{enumerate}
    \item $K$ is connected.
    \item $\alpha_k(K) = (p - n) \binom{n}{k} + \binom{n}{k+1}$ for some $1 \le k \le n$.
\end{enumerate}
\end{theorem}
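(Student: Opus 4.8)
**The plan is to prove the two directions separately, using the counting result of Theorem~\ref{1 tree simplices information} for the forward direction and the inequality of Lemma~\ref{l 3.4} for the converse.**

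For the forward direction, suppose $K$ is a simplicial tree with $p$ vertices. Then Theorem~\ref{1 tree simplices information} gives $\alpha_k(K) = (p-n)\binom{n}{k} + \binom{n}{k+1}$ for \emph{every} $1 \leq k \leq n$, which in particular yields the equality for some $k$ (indeed all $k$), and connectedness is part of the definition of a simplicial tree. So this direction is immediate from results already established.

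The converse is where the real work lies, and I would split it according to the range of $k$. Assume $K$ is connected with $p$ vertices and $\alpha_k(K) = (p-n)\binom{n}{k} + \binom{n}{k+1}$ for some fixed $1 \leq k \leq n$. If $1 \leq k \leq n-1$, then Lemma~\ref{l 3.4} says $\alpha_k(K) \geq (p-n)\binom{n}{k} + \binom{n}{k+1}$ with equality \emph{only if} $K$ is a simplicial tree; since our hypothesis is precisely that equality holds, $K$ must be a simplicial tree. The remaining and genuinely separate case is $k = n$. Here I would argue as follows: since $K$ is connected it has an $(n-1)$-ordering by Theorem~\ref{3.2}, so it suffices (by Theorem~\ref{3.5}) to upgrade this to an $(n-1)$-\emph{complete} ordering, equivalently to show $K$ is acyclic. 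The plan is to show that the given count $\alpha_n(K) = (p-n)\binom{n}{n} + \binom{n}{n+1} = p - n$ forces acyclicity; I would use Lemma~\ref{l 3.5}, whose two parts bracket $\alpha_{n-1}(K)$ between $n\alpha_n(K)+1$ (from above, using connectedness) and, when no $(n-1,n)$-simplicial cycle exists, from below by the same quantity.

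\textbf{The main obstacle is the case $k = n$}, where Lemma~\ref{l 3.4} does not apply and no direct counting inequality is available. My approach there is to translate the hypothesis $\alpha_n(K) = p-n$ into a statement about $\alpha_{n-1}$ and invoke Lemma~\ref{l 3.5}. Concretely, suppose for contradiction that $K$ is cyclic; then by Remark~\ref{Remark 1} and Corollary~\ref{pr1} it contains an $(n-1,n)$-simplicial cycle sequence. I would then show that the presence of such a cycle, combined with connectedness, forces $\alpha_{n-1}(K) < n\alpha_n(K) + 1$ strictly (the second part of Lemma~\ref{l 3.5} gives $\alpha_{n-1}(K) \leq n\alpha_n(K) + 1$, and a cycle should cost us at least one $(n-1)$-face in the ordering, making the inequality strict). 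Meanwhile, if $K$ \emph{were} acyclic we would have $\alpha_{n-1}(K) \geq n\alpha_n(K)+1$ by the first part of Lemma~\ref{l 3.5}; reconciling these, together with the fixed value $\alpha_n(K) = p-n$, should pin down $\alpha_{n-1}(K) = n(p-n)+1 = (p-n)\binom{n}{n-1} + \binom{n}{n}$, i.e. the tree count at level $n-1$, reducing the $k=n$ case back to the already-settled $k = n-1$ case. The delicate point to verify carefully is that a cycle strictly lowers $\alpha_{n-1}$ relative to the tree value, so that the two bounds of Lemma~\ref{l 3.5} are simultaneously satisfiable only in the acyclic case.
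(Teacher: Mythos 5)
Your overall decomposition matches the paper's: the forward direction via Theorem~\ref{1 tree simplices information}, and the converse for $1 \leq k \leq n-1$ via the equality case of Lemma~\ref{l 3.4}. The gap is in the case $k=n$, which you rightly identify as the crux but then handle with an argument that does not close. First, the step ``$K$ cyclic $\Rightarrow$ $K$ contains an $(n-1,n)$-simplicial cycle sequence'' does not follow from Remark~\ref{Remark 1} or Corollary~\ref{pr1}: those results run in the opposite direction (an $(n-1,n)$-cycle produces lower-dimensional cycles, and the absence of some lower-dimensional cycle rules out an $(n-1,n)$-cycle). The paper's own Fig.~\ref{fig9} is a connected complex with a $(0,2)$-simplicial cycle but no $(1,2)$-simplicial cycle, so a cyclic complex need not carry an $(n-1,n)$-cycle and your contradiction hypothesis does not cover all cyclic $K$. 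Second, the strict inequality you propose (``an $(n-1,n)$-cycle forces $\alpha_{n-1}(K) < n\alpha_n(K)+1$'') is nowhere established and is exactly the delicate point you defer; moreover, the lower bound you invoke from part (1) of Lemma~\ref{l 3.5} is conditional on acyclicity, which is what you are trying to prove, so the bracketing as you set it up is circular.

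The repair is short and is what the paper does: no case analysis on cycles is needed. For connected $K$ with $\alpha_n(K)=p-n$, part (2) of Lemma~\ref{l 3.5} gives $\alpha_{n-1}(K) \leq n\alpha_n(K)+1 = n(p-n)+1$ using connectedness alone, while Lemma~\ref{l 3.4} applied at level $k=n-1$ gives the unconditional lower bound $\alpha_{n-1}(K) \geq (p-n)\binom{n}{n-1}+\binom{n}{n} = n(p-n)+1$. These squeeze $\alpha_{n-1}(K)$ to the tree value, and the equality clause of Lemma~\ref{l 3.4} then yields that $K$ is a simplicial tree. You had both ingredients in hand (you use Lemma~\ref{l 3.4} for $k \leq n-1$ and part (2) of Lemma~\ref{l 3.5} for the upper bound), but you replaced the unconditional lower bound from Lemma~\ref{l 3.4} with the acyclicity-conditional one from Lemma~\ref{l 3.5}, which is what forced the unnecessary and incomplete detour through cycles.
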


\begin{proof}
Let $K$ be a connected pure $n$-simplicial complex such that
\[
\alpha_k(K) = (p - n) \binom{n}{k} + \binom{n}{k+1}
\]
for some $1 \le k \le n$.

If $1 \le k \le n-1$, then by Lemma~\ref{l 3.4}, $K$ is an $(n-1,n)$-tree. Now consider the case $k = n$. By Lemma~\ref{l 3.5}, we have
\[
\alpha_{n-1}(K) \le n(p - n) + 1.
\]
Furthermore, by Lemma~\ref{l 3.4}, we obtain
\[
\alpha_{n-1}(K) \ge (p - n)n + 1.
\]
Combining these inequalities, we conclude that
\[
\alpha_{n-1}(K) = (p - n)n + 1.
\]
Therefore, by Lemma~\ref{l 3.4}, $K$ is an $(n-1,n)$-tree.
\end{proof}

We next present an example showing that the conditions proposed in Conjectures~\ref{C1} and~\ref{C2} are not sufficient to characterise $(m,n)$-trees. Since both conjectures are formulated for all $1 \le m \le n-1$, it suffices to provide a counterexample for a particular choice of $(m,n)$. In the case $m=1$, the conditions in Conjectures~\ref{C1} and~\ref{C2} coincide, as the index $k$ can only take the value $1$. Hence, the following example disproves the ``if'' direction of both conjectures.

Consider the pure $2$-simplicial complex
\[
K = \overline{\{\{1,2,3\}, \{2,5,6\}, \{3,4,5\}\}},
\]
where the vertex set is $V(K)=\{1,2,3,4,5,6\}$ (see Figure~\ref{fig3}). Then $\alpha_0(K)=6$ and $\alpha_1(K)=9$. The complex has no $(1,2)$-circuit sequences and satisfies the enumerative identity in Conjectures~\ref{C1} and~\ref{C2}. However, $K$ is disconnected and hence is not an $(1,2)$-tree. This provides a counterexample to the ``if'' direction of both conjectures.


\begin{figure}[h] 
    \centering
    \includegraphics[width=0.5\textwidth]{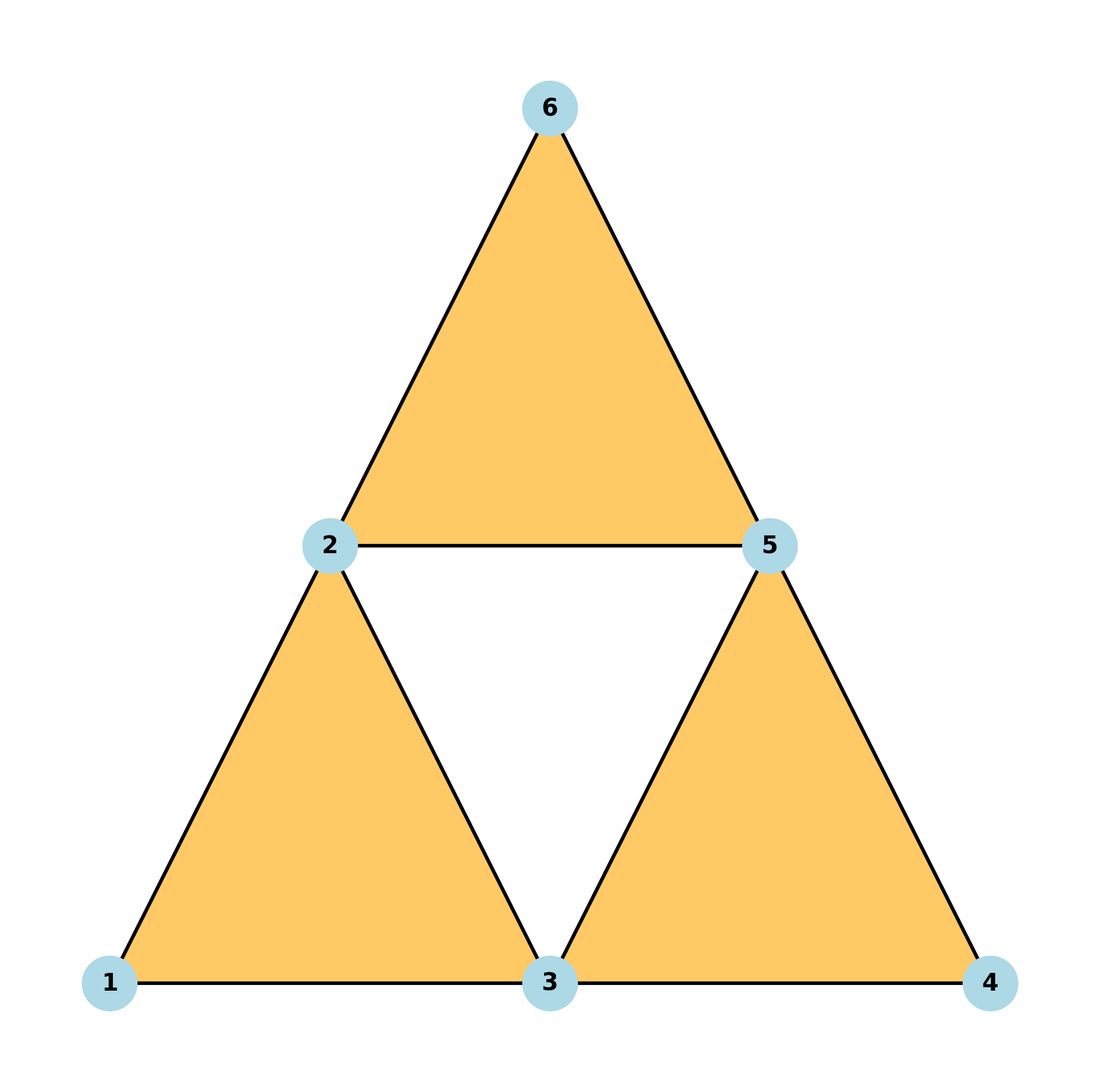} 
    \caption{A pure $2$-simplicial complex $K = \overline{\{\{1,2,3\}, \{2,5,6\}, \{3,4,5\}\}}$ illustrating a counterexample to Conjectures~\ref{C1} and~\ref{C2}.}
    \label{fig3} 
\end{figure}

Before stating a modified version of Dewdney's conjecture, we establish a simple relationship between circuit sequences of different dimensions. The following lemma shows that the existence of an $(n-1,n)$-circuit sequence forces the existence of an $(m,n)$-circuit sequence for every $0 \leq m \leq n-2$.

\begin{lemma}\label{1rt}
   If a pure $n$-simplicial complex $K$ contains an $(n-1,n)$-circuit sequence, then for every $0 \leq m \leq n-2$, it also contains an $(m,n)$-circuit sequence.
\end{lemma}

\begin{proof}
Let
$$
\sigma_i=\sigma_1,\eta_1,\sigma_2,\eta_2,\ldots,\sigma_r,\eta_r,\sigma_{r+1}=\sigma_i
$$
be an $(n-1,n)$-circuit sequence, and let $0\le m\le n-2$.

Choose an arbitrary $m$-face $\tau_1$ of $\sigma_1$. Since $\sigma_2$ is distinct from $\sigma_1$, there exists an $m$-face $\tau_2$ of $\sigma_2$ such that $\tau_2\neq\tau_1$. If $\tau_1$ is a face of $\eta_2$, then
$$
\tau_1,\eta_1,\tau_2,\eta_2,\tau_1
$$
is an $(m,n)$-circuit sequence, and we are done.

Assume therefore that $\tau_1$ is not a face of $\eta_2$. Choose an $m$-face $\tau_3$ of $\sigma_3$ with $\tau_3\neq\tau_2$. If $\tau_1$ is a face of $\eta_3$, then
$$
\tau_1,\eta_1,\tau_2,\eta_2,\tau_3,\eta_3,\tau_1
$$
is an $(m,n)$-circuit sequence.

Suppose now that $\tau_1$ is not a face of $\eta_3$. Choose an $m$-face $\tau_4$ of $\sigma_4$ with $\tau_4\neq\tau_3$. If $\tau_4=\tau_2$, then
$$
\tau_2,\eta_2,\tau_3,\eta_3,\tau_4
$$
is an $(m,n)$-circuit sequence. Otherwise, if $\tau_4\neq\tau_2$ and $\tau_1$ is a face of $\eta_4$, then
$$
\tau_1,\eta_1,\tau_2,\eta_2,\tau_3,\eta_3,\tau_4,\eta_4,\tau_1
$$
is an $(m,n)$-circuit sequence.

If neither of these possibilities occurs, we continue inductively. At the $k$-th step, choose an $m$-face $\tau_k$ of $\sigma_k$ distinct from $\tau_{k-1}$. If $\tau_k$ coincides with one of the previously chosen faces, then the corresponding subsequence determines an $(m,n)$-circuit sequence. Otherwise, if $\tau_1$ is a face of $\eta_k$, then
$$
\tau_1,\eta_1,\tau_2,\eta_2,\ldots,\tau_k,\eta_k,\tau_1
$$
is an $(m,n)$-circuit sequence.

Since the given $(n-1,n)$-circuit sequence has finite length, this process cannot continue indefinitely. Hence, after finitely many steps, one of the above two situations must occur, yielding an $(m,n)$-circuit sequence. Therefore, an $(m,n)$-circuit sequence exists.
\end{proof}

We now present a modified version of Dewdney’s conjecture for the case $m = n-1$. The modification consists of strengthening the circuit-freeness assumption and replacing the original enumerative identity by explicit counting conditions on $(n-1)$- and $n$-simplices. The resulting formulation restores the classical correspondence between acyclicity, counting, and tree structure in the $(n-1,n)$-setting. 

\begin{theorem}\label{thm5.5}
A pure $n$-simplicial complex $K$ with $p$ vertices is an $(n-1,n)$-tree if and only if it satisfies the following three properties:
\begin{enumerate}
    \item\label{1403} $K$ has no $(m,n)$-circuit sequences for some $0 \le m \le n-1$.
    \item\label{222} $\alpha_{n-1}(K) = (p - n)n + 1$.
    \item\label{9916} $\alpha_n(K) = p - n$.
\end{enumerate}
\end{theorem}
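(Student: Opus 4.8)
The plan is to prove the two implications separately, with the forward direction being routine and the reverse direction resting on a component-counting identity. For the forward direction, suppose $K$ is a simplicial tree. Since a tree is acyclic by definition, it contains no $(m,n)$-simplicial cycle sequence for any $0 \leq m \leq n-1$, so property~\ref{1403} holds. The two counts then follow immediately from Theorem~\ref{1 tree simplices information}: taking $k = n-1$ gives $\alpha_{n-1}(K) = (p-n)\binom{n}{n-1} + \binom{n}{n} = (p-n)n + 1$, which is property~\ref{222}, and taking $k = n$ gives $\alpha_n(K) = (p-n)\binom{n}{n} + \binom{n}{n+1} = p-n$, which is property~\ref{9916}.

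For the reverse direction, I would first reduce the entire problem to proving that $K$ is connected. Indeed, property~\ref{222} says exactly that the formula of Theorem~\ref{3.7} holds for $k = n-1$, and property~\ref{9916} that it holds for $k = n$; hence once connectivity is established, Theorem~\ref{3.7} immediately yields that $K$ is a simplicial tree. The first substantive step is therefore to extract from property~\ref{1403} the single cycle-freeness statement actually needed, namely that $K$ has no $(n-1,n)$-simplicial cycle sequence. If the $m$ in property~\ref{1403} equals $n-1$ this is immediate; if $0 \leq m \leq n-2$ (so $n \geq 2$), it follows from Remark~\ref{iq1}; and when $n = 1$ the only admissible $m$ is $0 = n-1$, so the statement is again immediate.

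The heart of the argument is a counting identity over the connected components. Using Proposition~\ref{3.1}, write $K^n = L_1 \sqcup \dots \sqcup L_c$ with each $\overline{L_i}$ a component, and set $t_i = \alpha_n(\overline{L_i})$. Two observations make the relevant counts additive: the $n$-simplices are partitioned by construction, so $\alpha_n(K) = \sum_i t_i$; and each $(n-1)$-simplex lies in exactly one component, since if an $(n-1)$-face were a face of $n$-simplices $\eta \in L_i$ and $\eta' \in L_j$, a length-one $(n-1,n)$-path through $\eta$ would relate $\eta$ and $\eta'$, forcing $i = j$. Hence $\alpha_{n-1}(K) = \sum_i \alpha_{n-1}(\overline{L_i})$. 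Each component is connected and, inheriting cycle-freeness from $K$, has no $(n-1,n)$-simplicial cycle sequence; applying both parts of Lemma~\ref{l 3.5} to it yields the two-sided bound $\alpha_{n-1}(\overline{L_i}) = n t_i + 1$. Summing gives $\alpha_{n-1}(K) = n\sum_i t_i + c = n\,\alpha_n(K) + c$. Substituting property~\ref{9916} ($\alpha_n(K) = p-n$) and property~\ref{222} ($\alpha_{n-1}(K) = (p-n)n + 1$) forces $c = 1$, so $K$ is connected, and the reduction above closes the argument.

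The main obstacle I anticipate is that in the reverse direction connectivity must be deduced purely from the numerical data, not from the cycle condition. Property~\ref{1403} with $m = n-1$ does not by itself guarantee acyclicity: the complex of Fig.~\ref{fig9} has no $(1,2)$-cycle yet carries a $(0,2)$-cycle, so one cannot argue that each component is a tree merely from its $(n-1,n)$-cycle-freeness. The decisive point is instead that Lemma~\ref{l 3.5} pins every connected, $(n-1,n)$-cycle-free component to the \emph{exact} count $n t_i + 1$, leaving the number of components $c$ as the only free parameter in the sum; properties~\ref{222} and~\ref{9916} then collapse $c$ to one. The two supporting subtleties to handle with care are the additivity of $\alpha_{n-1}$ across components (the no-sharing claim above) and the degenerate case $n = 1$, where Remark~\ref{iq1} is not available but is also not needed.
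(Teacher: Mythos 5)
Your proof is correct and follows essentially the same route as the paper: reduce property~\ref{1403} to $(n-1,n)$-cycle-freeness via Remark~\ref{iq1}, decompose $K$ into components, pin each component's $(n-1)$-simplex count to $n\,\alpha_n(\overline{L_i})+1$ by both parts of Lemma~\ref{l 3.5}, sum to force the number of components to be $1$, and conclude with Theorem~\ref{3.7}. Your added care about the additivity of $\alpha_{n-1}$ across components and the edge cases $m=n-1$ and $n=1$ fills in details the paper leaves implicit, but the argument is the same.
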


\begin{proof}
Let $K$ be a pure $n$-simplicial complex satisfying properties~(\ref{1403}), (\ref{222}), and~(\ref{9916}). It follows from Lemma~\ref{1rt} that $K$ contains no $(n-1,n)$-circuit sequences.

Let $L_1, \dots, L_r$ denote the components of $K$ for some $r \ge 1$, so that $K = \bigcup_{i=1}^r L_i$. For all $1 \le i \le r$, since $L_i$ is connected and contains no $(n-1,n)$-circuit sequences, Lemma~\ref{l 3.5} implies
\[
\alpha_{n-1}(L_i) = n \alpha_n(L_i) + 1. \tag{i} \label{eq:1}
\]

Summing~\eqref{eq:1} over $i = 1, \dots, r$, we obtain
\[
\alpha_{n-1}(K)
= \sum_{i=1}^r \bigl( n \alpha_n(L_i) + 1 \bigr)
= n \sum_{i=1}^r \alpha_n(L_i) + r
= n(p - n) + r. \tag{ii} \label{eq:11}
\]

Comparing~\eqref{eq:11} with the identity $\alpha_{n-1}(K) = (p - n)n + 1$, we conclude that $r = 1$. Hence, $K$ is connected. By Theorem~\ref{3.7}, $K$ is an $(n-1,n)$-tree.
\end{proof}



\begin{figure}
    \centering
    \includegraphics[width=0.5\linewidth]{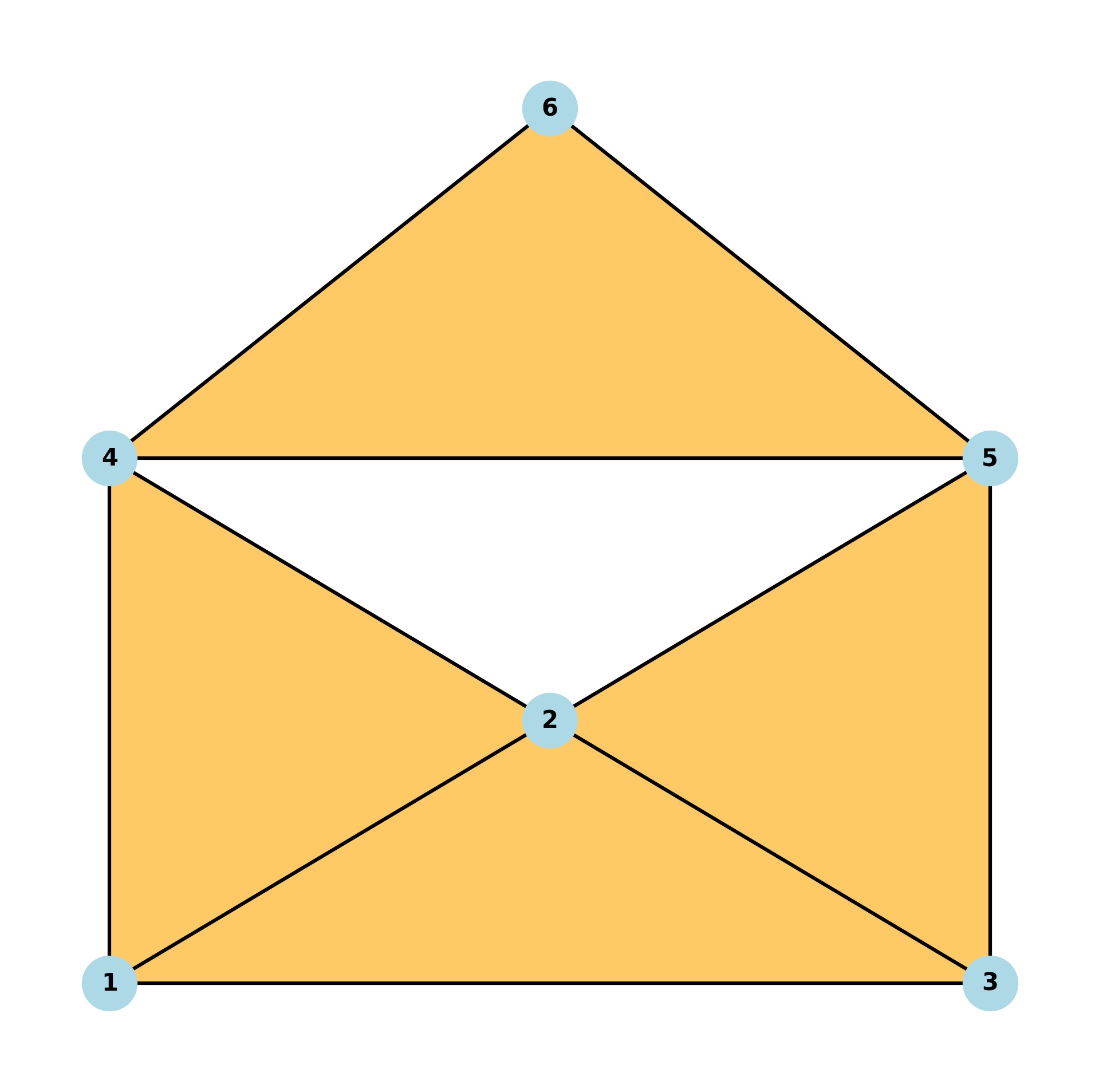}
    \caption{Disconnected pure $2$-simplicial complex $K=\overline{\{\{1,2,4\},\{1,2,3\},\{2,3,5\},\{4,5,6\}\}}$ with six vertices, four 2-simplices, and contains no $(1,2)$-circuit sequence.}
    \label{fig:4}
\end{figure}

To see that condition (\ref{222}) in Theorem \ref{thm5.5} is necessary and cannot be omitted, consider the example in Figure~\ref{fig:4}. In this figure, a pure $2$-simplicial complex $$K=\overline{\{\{1,2,4\},\{1,2,3\},\{2,3,5\},\{4,5,6\}\}}$$is disconnected, consisting of six vertices and four $2$-simplices, with no $(1,2)$-circuit sequence. This shows that without condition (\ref{222}), the complex satisfies conditions (\ref{1403}) and (\ref{9916}) of Theorem \ref{thm5.5} but fails to be an $(1,2)$-tree due to being disconnected.

However, a natural question arises: can condition~(\ref{222}) in Theorem~\ref{thm5.5} be relaxed by replacing it with the knowledge of $\alpha_k(K)$ for some $1 \leq k \leq n-1$?

For $n=2$, the above question is true, as it follows from Theorem~\ref{thm5.5}, where knowledge of $\alpha_1(K)$ suffices. This naturally leads to the following question of whether the same holds for $n \geq 3$.

\begin{question}
Let $K$ be a pure $n$-simplicial complex with $p$ vertices. Suppose 
that for some $1 \leq k \leq n-1$, 
\[
\alpha_k(K) = (p - n)\binom{n}{k} + \binom{n}{k+1}
\quad \text{and} \quad \alpha_n(K) = p - n,
\]
and that $K$ has no $(m,n)$-circuit sequences for some 
$0 \leq m \leq n-1$. 

Does it follow that $K$ is an $(n-1,n)$-tree?
\end{question}

Our current framework does not provide a proof for this, nor have we been able to construct a counterexample, and we leave it as an open question for future work.

\section*{Data availability}
This work is purely theoretical and does not use or generate any datasets. Hence, no data are available for sharing.

\section*{Conflict of interest}
The author declares no competing or conflicting interests.

\appendix
\section*{Appendix}

\subsection*{Examples distinguishing different notions of higher-dimensional trees}

In this appendix, we present examples illustrating that different notions of higher-dimensional trees appearing in the literature are not equivalent.

We first recall the notion of a hypertree introduced by Kalai~\cite{kalai1983enumeration, linial2019enumeration}.

\begin{definition*}
A $d$-hypertree $T$ is a $d$-dimensional simplicial complex with a full $(d-1)$-skeleton such that
\[
H_{d-1}(T;\mathbb{Q})=0
\quad \text{and} \quad
H_d(T;\mathbb{Q})=0.
\] 
\end{definition*} 

Here, the conditions $H_{d-1}(T;\mathbb{Q})=0$ and $H_d(T;\mathbb{Q})=0$ mean that $T$ has trivial homology in dimensions $d-1$ and $d$ over $\mathbb{Q}$. Moreover, a simplicial complex is said to have a full $(d-1)$-skeleton if it contains every $(d-1)$-simplex determined by its vertex set.

We now show that there exist $(m,n)$-trees that are not hypertrees in the sense of Kalai.

Consider the simplicial complex shown in Figure~\ref{1fig:comparison_trees}(a):
\[
H_1=\{\{1\},\{2\},\{3\},\{4\},\{1,2\},\{1,3\},\{2,3\},\{2,4\},\{3,4\},\{1,2,3\},\{2,3,4\}\}.
\]
By Definition~\ref{1 d mn tree}, $H_1$ is a $(1,2)$-tree.

However, $H_1$ is not a $2$-hypertree in the sense of Kalai, since it does not have a full $1$-skeleton. In particular, the edge $\{1,4\}$ is not a simplex of $H_1$.

\begin{figure}[h]
    \centering
    \begin{subfigure}{0.45\textwidth}
        \centering
        \includegraphics[width=\linewidth]{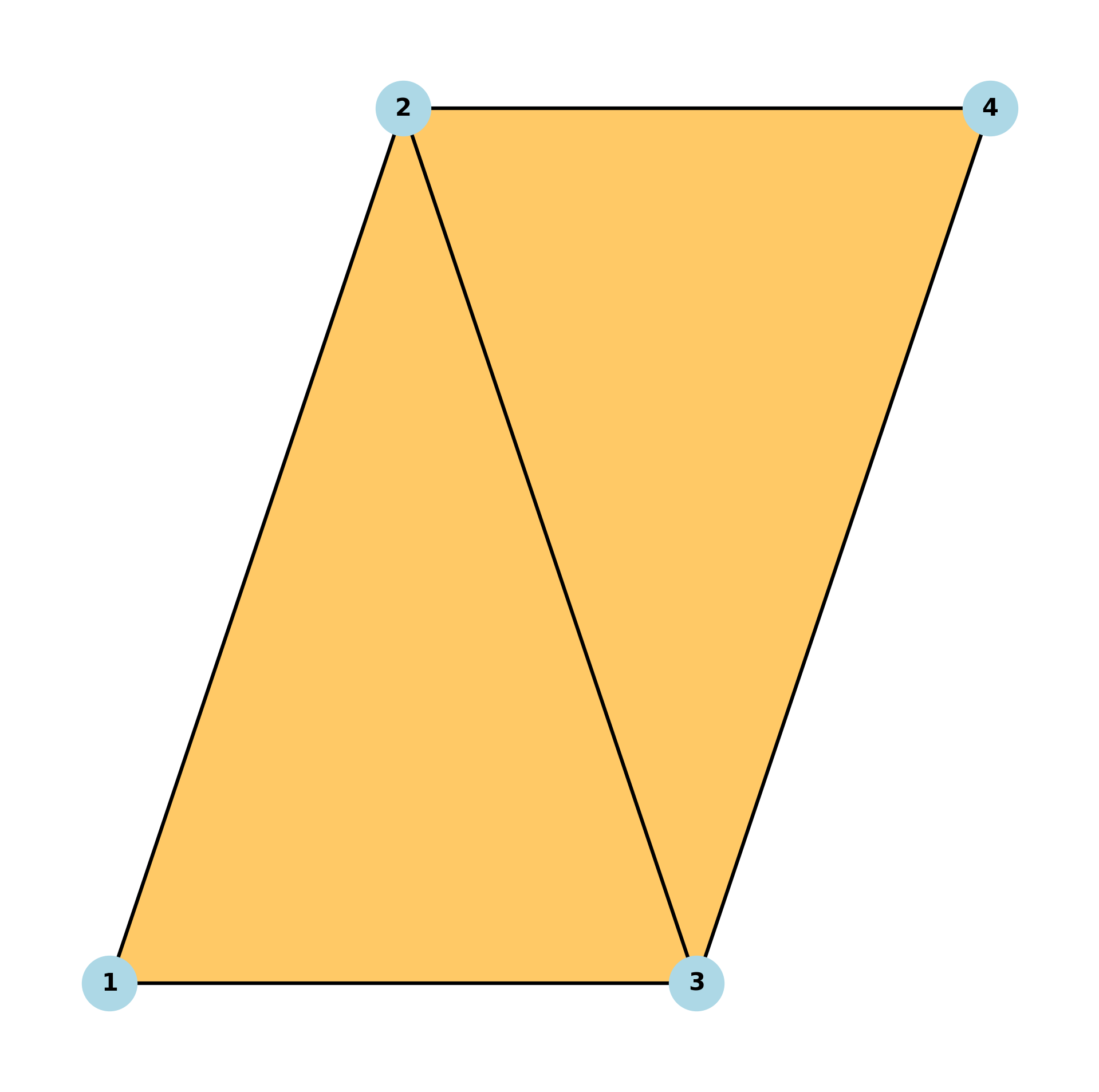}
        \caption{}
    \end{subfigure}
    \hfill
    \begin{subfigure}{0.45\textwidth}
        \centering
        \includegraphics[width=\linewidth]{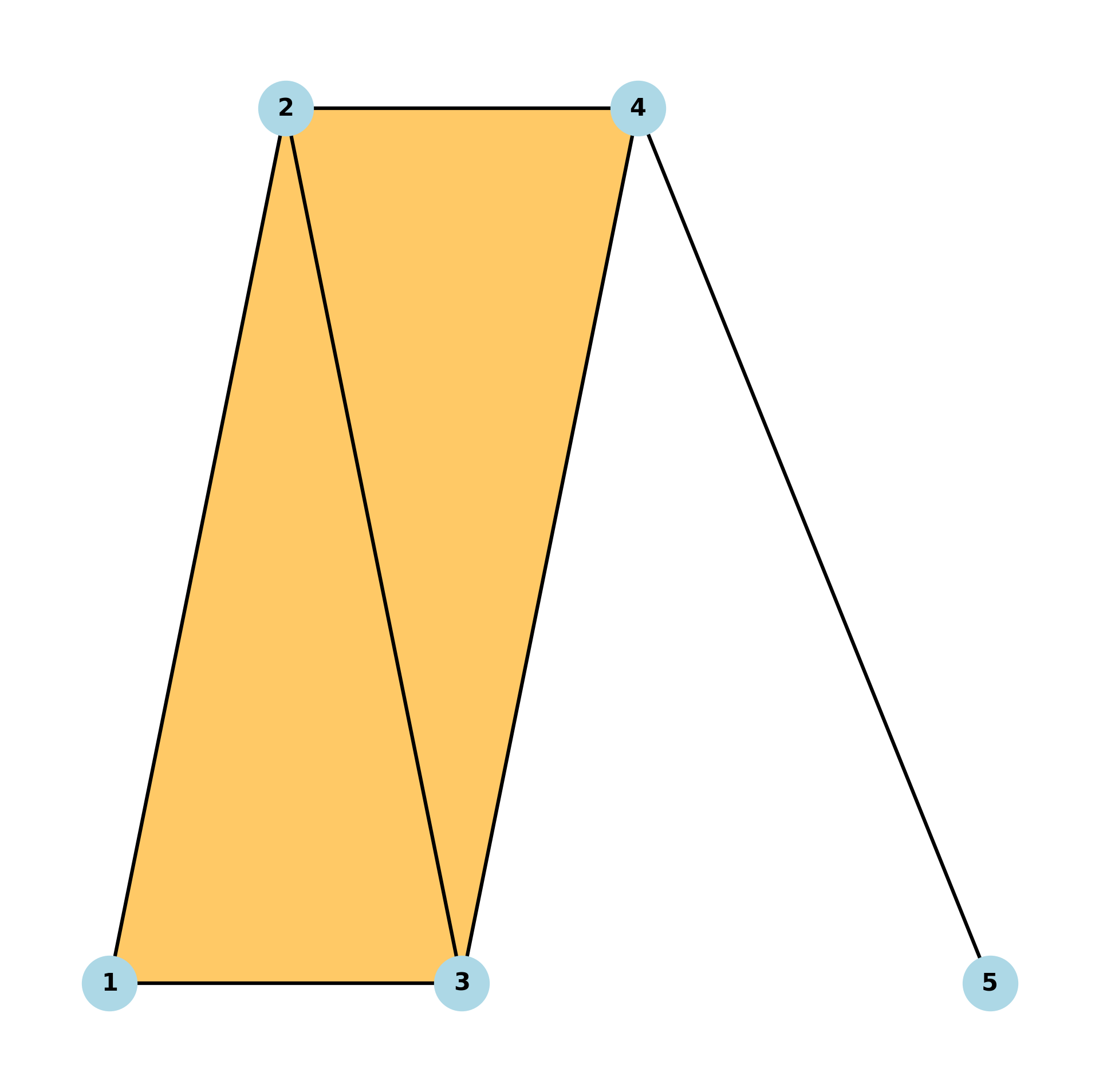}
        \caption{}
    \end{subfigure}
    
    \caption{(a) A $(1,2)$-tree which is not a $2$-hypertree in the sense of Kalai. (b) A simplicial tree in the sense of Faridi which is not an $(m,n)$-tree in the sense of Dewdney.}
    \label{1fig:comparison_trees}
\end{figure}

We next show that simplicial trees in the sense of Faridi need not be $(m,n)$-trees in the sense of Dewdney.

For the definition of simplicial trees introduced by Faridi, we refer the reader to \cite{caboara2007simplicial}, since including the definition here would require introducing additional notions that are not used elsewhere in the present paper.

Consider the simplicial complex shown in Figure~\ref{1fig:comparison_trees}(b):
\[
H_2=\{\{1\},\{2\},\{3\},\{4\},\{5\},\{1,2\},\{1,3\},\{2,3\},\{2,4\},\{3,4\},\{4,5\},\{1,2,3\},\{2,3,4\}\}.
\]
This example is taken from \cite{caboara2007simplicial}, where it is shown that $H_2$ is a simplicial tree in the sense of Faridi.

However, $H_2$ is not an $(m,n)$-tree in the sense of Dewdney for any $m$ and $n$, since Dewdney's $(m,n)$-trees are defined only for pure simplicial complexes, whereas $H_2$ is not pure.



\end{document}